\newtheorem{thm}{Theorem}[section] 
\newtheorem{prop}[thm]{Proposition} 
\newtheorem{lem}[thm]{Lemma} 
\newtheorem{cor}[thm]{Corollary} 
\newtheoremstyle{named}{}{}{\itshape}{}{\bfseries}{.}{.5em}{#3}
\theoremstyle{named} 
\theoremstyle{remark}
\newtheorem{rem}[thm]{Remark} 
\theoremstyle{definition}
\theoremstyle{remark}
\newtheorem{exa}[thm]{Example}
\title[H\lowercase{igh} O\lowercase{rder} E\lowercase{lements} \lowercase{in} F\lowercase{inite} F\lowercase{ields}]{H\lowercase{igh} O\lowercase{rder} E\lowercase{lements} \lowercase{in} F\lowercase{inite} F\lowercase{ields} A\lowercase{rising} \lowercase{from} R\lowercase{ecursive} T\lowercase{owers}}
\author[D\lowercase{ose}, M\lowercase{ercuri}, P\lowercase{al}, S\lowercase{tirpe}]{V\lowercase{alerio} D\lowercase{ose}, P\lowercase{ietro} M\lowercase{ercuri}, A\lowercase{nkan} P\lowercase{al}, C\lowercase{laudio} S\lowercase{tirpe}}
\address{Valerio Dose, Department of Economics and Finance, LUISS University, Roma, Italy, ORCiD: 0000-0002-8186-0023}
\email{vdose@luiss.it}
\address{Pietro Mercuri, University of Udine, Udine, Italy, 
ORCiD: 0000-0003-4402-6432}
\email{mercuri.ptr@gmail.com}
\address{Ankan Pal, Department of Mathematics, University of L'Aquila, Italy}
\email{ankanpal100@gmail.com}
\address{Claudio Stirpe, Convitto Nazionale R. Margherita, Anagni, Italy}
\email{clast@inwind.it}
\subjclass[2010]{Primary 11T55; Secondary 11T71}
\keywords{Finite Field, High Order Elements, Recursive Towers, Galois Towers} 
\begin{document}

\begin{abstract}
We provide a recipe to construct towers of fields producing high order elements in $\mathrm{GF}(q,2^n)$, for odd $q$, and in $\mathrm{GF}(2,2 \cdot 3^n)$, for $n \ge 1$. These towers are 
obtained recursively by
$x_{n}^2 + x_{n} = v(x_{n - 1})$, for odd $q$, or $x_{n}^3 + x_{n} = v(x_{n - 1})$, for $q=2$, where $v(x)$ is a polynomial of small degree over the prime field $\mathrm{GF}(q,1)$ and $x_n$ belongs to the finite field extension $\mathrm{GF}(q,2^n)$, for $q$ odd, or to $\mathrm{GF}(2,2\cdot 3^n)$. Several examples are carried out and analysed numerically. The lower bounds of the orders of the groups generated by $x_n$, or by the discriminant $\delta_n$ of the polynomial, are similar to the ones obtained in \cite{burkhart2009finite}, but we get better numerical results in some cases.
\end{abstract}

\maketitle

\section{Introduction}

Finding elements of high multiplicative order in a finite field is an interesting problem in computational number theory and has applications in cryptography (for instance: Discrete Logarithm Problem). A general method to find high order elements was given in \cite{gao1999elements}, later improved in \cite{conflitti2001} and \cite{popovych2014gao}. Another general result in this area is an algorithmic technique for finding primitive elements which is devised in \cite{huang2015primitive}.
Such technique is efficient in finite fields of small characteristic. 
Other strategies which allow to construct elements of high order usually address specific sequences of finite fields. In this regard, methods involving Gauss periods were first proposed in the results summarized in \cite{vonzurgathen_shparlinski2001gauss}. After that, an extensive literature followed with works such as  \cite{ahmadi_etal2010gauss}, \cite{popovych2012}, \cite{popovych2013}, \cite{chang2013gauss} and \cite{popovych2014sharpening}. Recently, Artin-Schreier extensions were also effectively used in \cite{popovych2015artin} and \cite{brochero2016artin}. 
Another interesting approach is to look for high order elements which arise as coordinates of points on an algebraic curve defined over a finite field (see for example \cite{voloch2007curves}, \cite{voloch2010elliptic} and \cite{chang2013curves}). One way which has been explored for generating elements of this type is through the iterative use of polynomial equations of type $f(x_{n - 1},x_n) = 0$, defining suitable towers of fields, which we address as \textit{recursive towers} in this work. Examples of this can be found in \cite{burkhart2009finite}, \cite{voloch2010elliptic}, \cite{popovych2015wiedmann} and \cite{popovych2018conway}. 

In \cite{burkhart2009finite}, a recursive tower defined by $f(x_{n - 1},x_n)$ is used to produce elements $\delta_n$ with high multiplicative order in $\mathrm{GF}(q,2^n)$, for $q$ odd, and in $\mathrm{GF}(q,3^n)$, for $q \neq 3$. 
The choice of the polynomial $f$ for the recursive process to generate high order elements in finite field extensions, was limited to the equations of the modular curve towers in \cite{elkies2001explicit}.

In this work, we attempt to generalize the choice of the polynomials.
We illustrate in detail several interesting towers of fields defined by $x_n^2 + x_n = v(x_{n - 1})$, where $v(x) \in \mathrm{GF}(q,1)[x]$, for $q$ odd, or $x_n^3 + x_n = v(x_{n - 1})$, for $v(x) \in \mathrm{GF}(2,1)[x]$. These towers
generate elements of high orders in $\mathrm{GF}(q,2^n)$ and in $\mathrm{GF}(2,2 \cdot 3^n)$, for $n \geq 1$. 
We also give a recipe for finding other towers of the same form which have similar properties. The simple algebraic conditions given in Sections \ref{sec:odd} and \ref{sec:even}, 
which differ 
partially from the conditions required in \cite{burkhart2009finite} (Remark \ref{BurkhartConditionsComparison} below), seem to play an important role for this purpose. In fact, in many 
of the cases we studied, these conditions are 
useful to prove the existence of high order elements
$x_n$, in the field extension. 

Throughout this paper, $\delta_n$ in $\mathrm{GF}(q,2^n)$ is the discriminant of the polynomial $f(x_{n},y)$ in $\mathrm{GF}(q,2^n)[y]$. In Corollary \ref{bound-odd}, we prove that the multiplicative orders of $x_n$ and $\delta_n$ grow very fast if $x_{n - j}^2$ and $\delta_{n-j}^2$ do not belong to $\mathrm{GF}(q,2^{n - j - 1})$, for all $j < n-1$. Similar results hold also in even characteristic, see Corollary \ref{bound-even}. Notably, despite the bounds obtained are similar, the even characteristic case turns out to be completely new
with respect to \cite{burkhart2009finite}. 
In particular no additional conditions on the discriminant are required, and the details of the proof are worked out in a different manner. Furthermore the numerical performance of some of our examples are better than \cite{burkhart2009finite}, in the iterations we were able to compute. As already mentioned above, the polynomials used in \cite{burkhart2009finite} are the models of certain modular curves given in \cite{elkies2001explicit}. Despite this fact, a possible relation of the construction of high order elements with the arithmetic properties of such curves does not seem to play a role in the proof of the lower bounds. 
Instead, in one case, we do make use of some arithmetic properties of the algebraic curve considered
by us (Lemma \ref{lem:nonsqnear5}).

A comparative study with other relevant literature has also been carried-out.
For example, a specific
construction of high order elements in the same type of fields of odd characteristic $q$ can be found in \cite{cohen1992explicit}, and some variations on it are in \cite{meyn1995explicit} and \cite{chapman1997completely}. Comparing the numerical performance of their construction with our variety of examples, we observe that the results are similar for $q \equiv 1 \pmod  4$, while for $q \equiv 3 \pmod  4$ our construction performs better (see Section \ref{sec:numres} for examples with $q=3,11$).

In Section \ref{sec:notation}, we introduce the notation that we use in the paper. In Sections \ref{sec:odd} and \ref{sec:even}, we give the main results
which allow us to obtain the lower bounds on the order of $x_n$ and $\delta_n$. 
Section \ref{sec:odd} deals with odd characteristic and Section \ref{sec:even} deals with even characteristic.
The lists of towers satisfying the properties given in Sections \ref{sec:odd} and \ref{sec:even} are provided in Sections \ref{sec:list-odd} and \ref{sec:list-even}, respectively. Finally, in Section \ref{sec:numres}, we list numerical results obtained using MAGMA \cite{MAGMA}, about the seven towers listed in Sections \ref{sec:list-odd} and \ref{sec:list-even}.

\section{Background and notation}\label{sec:notation}

Let $q$ be an odd prime. By \emph{tower of fields}, or simply a \emph{tower}, we mean a sequence of field extensions
	$$K_1\subset K_2\subset \ldots \subset K_n\subset \ldots.$$
We are interested in infinite towers, namely towers such that the degree $[K_n : K_1]$ grows to infinity. All the towers considered in this paper are actually finite, normal and separable, i.e., each extension $K_n/K_{n - 1}$ is finite, normal and separable, for every $n > 1$. For each positive integer $n$, let $K_n = \mathrm{GF}(q,1)(x_n)$, where the element $x_n \in \mathrm{GF}(q,2^n)$ is given by a recursive formula $f(x_{n - 1},x_n) = 0$, for a polynomial $f(x,y) \in \mathrm{GF}(q,1)[x,y]$. In this case, we say that the tower $K_1 \subset K_2 \subset \ldots \subset K_n \subset \ldots$ is defined by $f(x_{n - 1},x_n)$ and we address this kind of towers as \emph{recursive towers}. We focus on towers defined by $f(x_{n - 1},x_n) = x_n^2 + x_n - v(x_{n - 1})$, for $n \ge 2$, with $x_1 \in \mathrm{GF}(q,2)$, and where $v(x)$ is a polynomial in $\mathrm{GF}(q,1)[x]$. We denote by $\delta_n$ the discriminant $\delta_n = 1 + 4v(x_n)$, for $n \ge 1$. We point out that both elements $x_n$ and $\delta_n$ belong to $\mathrm{GF}(q,2^n)$, but they could also lie in a smaller extension $\mathrm{GF}(q,2^k)$ for some $k < n$. Given the tower defined by $f(x_{n-1},x_n)$, we denote by $g(x,y) \in \mathrm{GF}(q,1)[x,y]$ a polynomial giving the relation between two consecutive discriminants $\delta_{n-1}$ and $\delta_{n}$, namely $g(\delta_{n-1},\delta_{n})=0$. In the case of even characteristic (Sections \ref{sec:even} and \ref{sec:list-even}), we deal with towers defined by $f(x_{n-1},x_n) = x_n^3 + x_n + v(x_{n - 1})$, with $x_n \in \mathrm{GF}(2,2 \cdot 3^n)$, for $n \ge 1$, and $v(x)$ being a polynomial in $\mathrm{GF}(2,1)[x]$.

Given two positive integers $j$ and $n$, such that $j < n$, we denote the norm of the field extension $\mathrm{GF}(q,2^n)/ \mathrm{GF}(q,2^{n - j})$ by, $\mathrm{N}_{n,j} \colon \mathrm{GF}(q,2^n) \to \mathrm{GF}(q,2^{n - j})$. The norm in the odd case is $\mathrm{N}_{n,j}(x) = x^{\prod_{i = 1}^j (q^{2^{n - i}}+1)}$. In order to apply the same techniques to even characteristic, we also denote by $\mathrm{N}_{n,j}\colon\mathrm{GF}(2,2\cdot 3^n)\to \mathrm{GF}(2,2 \cdot 3^{n-j})$ the norm of the extension $\mathrm{GF}(2,2 \cdot 3^n)/ \mathrm{GF}(2,2 \cdot 3^{n - j})$, namely $\mathrm{N}_{n,j}(x) = x^{\prod_{i = 1}^j (4^{2 \cdot 3^{n-i}} + 4^{3^{n - i}} + 1)}$. For every characteristic, we use the conventions $\mathrm{N}(x):=\mathrm{N}_{n,1}(x)$ and $\mathrm{N}_{n,0}(x)=x$.

We use the following lemma for estimating the order of the elements in finite fields. 
\begin{lem}\label{lem:estimate}
Let $\ell$ be a prime and let
$a$, $b$ and $c$ be positive integers such that $b<c$. Assume $a\equiv 1 \bmod \ell$.
Let $p$ be a prime dividing $\frac{1}{\ell}\sum_{j=1}^{\ell}a^{\ell^b(\ell-j)}$.
Then $p>\ell^{b+1}$ and
$\gcd\left(\sum_{j=1}^{\ell}a^{\ell^b(\ell-j)},\sum_{j=1}^{\ell}a^{\ell^c(\ell-j)}\right)=\ell$. In particular $\frac{1}{\ell}\sum_{j=1}^{\ell}a^{\ell^b(\ell-j)}$ and
$\frac{1}{\ell}\sum_{j=1}^{\ell}a^{\ell^c(\ell-j)}$ are coprime.
\end{lem}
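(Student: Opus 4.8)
The plan is to first analyze the expression $S_b := \sum_{j=1}^{\ell} a^{\ell^b(\ell - j)}$. Writing $t = a^{\ell^b}$, this is $S_b = \sum_{j=1}^{\ell} t^{\ell - j} = t^{\ell-1} + t^{\ell-2} + \cdots + t + 1 = \frac{t^\ell - 1}{t - 1}$, i.e.\ the $\ell$-th cyclotomic-type quotient $\Phi(t) = 1 + t + \cdots + t^{\ell-1}$. Since $a \equiv 1 \bmod \ell$, we have $t = a^{\ell^b} \equiv 1 \bmod \ell$ as well, so $S_b \equiv \ell \equiv 0 \bmod \ell$, which shows $\ell \mid S_b$ and makes $\frac{1}{\ell} S_b$ an integer. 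I would then pin down the exact $\ell$-adic valuation: using the lifting-the-exponent lemma (or a direct expansion $t = 1 + \ell^{b+1} m$ with $\ell \nmid$ the relevant factor, tracking that $a \equiv 1 \bmod \ell$ forces $v_\ell(a-1) \ge 1$ and hence $v_\ell(t - 1) = v_\ell(a-1) + b \ge b+1$), one gets $v_\ell(S_b) = v_\ell\!\left(\frac{t^\ell - 1}{t-1}\right) = 1$. Hence $\ell \,\|\, S_b$ exactly.

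Next I would bound the other prime divisors of $S_b$. Let $p \mid S_b$ with $p \ne \ell$. From $S_b = \frac{t^\ell - 1}{t - 1}$ we get $t^\ell \equiv 1 \bmod p$, so the multiplicative order of $t$ modulo $p$ divides $\ell$; it is either $1$ or $\ell$. If it were $1$, then $t \equiv 1 \bmod p$, whence $S_b \equiv \ell \bmod p$, forcing $p \mid \ell$, a contradiction. So $t$ has order exactly $\ell$ modulo $p$, and therefore $\ell \mid p - 1$. This already gives $p \ge \ell + 1$; to reach the sharper $p > \ell^{b+1}$ I would combine this with the valuation information: since $v_\ell(S_b) = 1$ and $p \mid S_b$ with $p$ contributing a factor coprime to $\ell$, and since the element $t = a^{\ell^b}$ has $\ell$-power order dividing... — more precisely, one uses that $t = a^{\ell^b}$ means the order of $a$ modulo $p$ divides $\ell^{b+1}$ but does not divide $\ell^b$ (the latter because $t \not\equiv 1 \bmod p$), so the order of $a$ mod $p$ is exactly $\ell^{b+1}$, whence $\ell^{b+1} \mid p - 1$ and thus $p \ge \ell^{b+1} + 1 > \ell^{b+1}$. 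This is the step I expect to require the most care, since it hinges on correctly identifying the multiplicative order of $a$ (not just of $t$) modulo $p$.

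Finally, for the gcd statement with $b < c$: any common prime divisor $p$ of $S_b$ and $S_c$ is either $\ell$ or, by the previous paragraph applied to each, satisfies simultaneously $\operatorname{ord}_p(a) = \ell^{b+1}$ and $\operatorname{ord}_p(a) = \ell^{c+1}$, which is impossible since $b \ne c$. Hence the only common prime is $\ell$, and since $v_\ell(S_b) = v_\ell(S_c) = 1$, we conclude $\gcd(S_b, S_c) = \ell$. Dividing out, $\frac{1}{\ell} S_b$ and $\frac{1}{\ell} S_c$ share no common prime at all, so they are coprime. The main obstacle, as noted, is the clean bookkeeping of $\ell$-adic valuations and multiplicative orders to upgrade $p \equiv 1 \bmod \ell$ to $p \equiv 1 \bmod \ell^{b+1}$; everything else is a routine cyclotomic argument.
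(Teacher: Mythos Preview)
The paper does not supply its own proof of this lemma; it simply cites \cite[Lemmas~1 and~2]{burkhart2009finite}. Your argument is the standard cyclotomic/multiplicative-order proof that those lemmas encode, and it is correct: writing $S_b=(t^\ell-1)/(t-1)$ with $t=a^{\ell^b}$, showing $v_\ell(S_b)=1$, and then pinning the order of $a$ modulo any other prime divisor $p$ of $S_b$ to be exactly $\ell^{b+1}$ is exactly the right line.

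Two small points worth tightening. First, when you pass from ``order of $t$ is $\ell$'' to ``order of $a$ is $\ell^{b+1}$'', you are implicitly using $p\nmid a$; this holds because $p\mid a$ would give $S_b\equiv 1\bmod p$, but it deserves a one-line remark. Second, the LTE invocation for $v_\ell(t-1)$ needs the modified form when $\ell=2$; however, since $b\ge 1$ (it is a positive integer by hypothesis) one has $a^{2^b}\equiv 1\bmod 4$, whence $v_2(S_b)=v_2(1+a^{2^b})=1$ directly, so the conclusion survives. With these two clarifications your proof is complete.
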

\begin{proof}
See \cite[Lemmas 1 and 2]{burkhart2009finite}.
\end{proof}
In order to compute the Galois group of a cubic polynomial in characteristic 2, the following result is useful.
\begin{lem}\label{lem:quadratic_resolvent}
Let $f(x)=x^3+ax^2+bx+c$ be a separable
irreducible polynomial
over a field $K$.
The Galois group of the extension given by the roots of $f$ is the alternating group $A_3$ if its quadratic resolvent
$R(x)=x^2+(ab-3c)x+a^3c+b^3+9c^2-6abc$
is reducible over $K$ and it is the symmetric group $S_3$ otherwise.
\end{lem}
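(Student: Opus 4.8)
The plan is to exhibit an explicit element of the splitting field of $f$ that is fixed by the cyclic permutations of the roots but moved by the transpositions, and then to recognise $R$ as the quadratic it satisfies. Since $f$ is separable and irreducible of degree $3$, its splitting field $L/K$ is Galois with group $G$ a transitive subgroup of $S_3$, so $G$ is either $A_3$ or $S_3$ and the task is only to decide which. Let $\alpha_1,\alpha_2,\alpha_3\in L$ be the roots of $f$ and put
\[
\theta=\alpha_1^2\alpha_2+\alpha_2^2\alpha_3+\alpha_3^2\alpha_1,\qquad
\theta'=\alpha_1\alpha_2^2+\alpha_2\alpha_3^2+\alpha_3\alpha_1^2 .
\]
A short check shows that the two $3$-cycles fix each of $\theta,\theta'$ while every transposition interchanges them; hence $\{\theta,\theta'\}$ is a single $S_3$-orbit, so $\theta+\theta'$ and $\theta\theta'$ are symmetric in the $\alpha_i$ and therefore lie in $K$, and $\theta,\theta'$ are the two roots of the monic quadratic $(x-\theta)(x-\theta')\in K[x]$.

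Next I would express this quadratic through the coefficients of $f$. Writing $e_1=\alpha_1+\alpha_2+\alpha_3=-a$, $e_2=\alpha_1\alpha_2+\alpha_1\alpha_3+\alpha_2\alpha_3=b$, $e_3=\alpha_1\alpha_2\alpha_3=-c$, one has $\theta+\theta'=e_1e_2-3e_3$ and $\theta\theta'=e_1^3e_3+e_2^3+9e_3^2-6e_1e_2e_3$, so that
\[
(x-\theta)(x-\theta')=x^2+(ab-3c)\,x+a^3c+b^3+9c^2-6abc=R(x).
\]
The identity for $\theta+\theta'$ is immediate; the identity for $\theta\theta'$ is the only genuinely computational point, obtained by a direct expansion or via Newton's identities, and I would treat it as a routine verification (this is the step most prone to sign errors, and the one I would double-check numerically). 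Thus $R$ is precisely the polynomial over $K$ whose factorisation detects whether $\theta$ itself lies in $K$.

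Finally I would read off $G$ from the splitting of $R$. Because $f$ is separable the $\alpha_i$ are distinct, so $\theta-\theta'$ equals $\prod_{1\le i<j\le 3}(\alpha_i-\alpha_j)$ up to sign and is nonzero; hence $R$ is separable, and it is reducible over $K$ if and only if $\theta\in K$ (and then $\theta'=(\theta+\theta')-\theta\in K$ as well). If $\theta\in K$, then $\theta$ is fixed by all of $G$; since every transposition sends $\theta$ to $\theta'\neq\theta$, the group $G$ contains no transposition and, being transitive, must be $A_3$. If $\theta\notin K$, some element of $G$ moves $\theta$, and the only elements of $S_3$ that do so are transpositions, so $G$ contains one and, being transitive, equals $S_3$. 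This gives the stated dichotomy. The one conceptual point to flag — and the reason this resolvent rather than the discriminant is used in characteristic $2$ — is that the usual square-root test is vacuous here: one has $\mathrm{disc}(f)=(\theta-\theta')^2$, but in characteristic $2$ the square root $\theta-\theta'=\prod_{i<j}(\alpha_i+\alpha_j)$ is itself fixed by every transposition and hence always lies in $K$, so one must instead track the genuinely $A_3$-invariant (not $S_3$-invariant) quantity $\theta$, which is exactly what $R$ records.
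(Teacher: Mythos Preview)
Your argument is correct: the element $\theta=\alpha_1^2\alpha_2+\alpha_2^2\alpha_3+\alpha_3^2\alpha_1$ is the standard $A_3$-invariant that is not $S_3$-invariant, the identities $\theta+\theta'=e_1e_2-3e_3$ and $\theta\theta'=e_1^3e_3+e_2^3+9e_3^2-6e_1e_2e_3$ are the right ones (and do translate to the stated coefficients of $R$), and the separability of $R$ via $\theta-\theta'=(\alpha_1-\alpha_2)(\alpha_2-\alpha_3)(\alpha_1-\alpha_3)\neq 0$ closes the loop cleanly. Your closing remark on why this resolvent, rather than the discriminant, is the correct tool in characteristic~$2$ is also accurate and to the point.

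By way of comparison, the paper does not supply its own proof of this lemma at all: it simply refers the reader to Kaplansky's \emph{Fields and Rings}. So your write-up is not an alternative route so much as an actual proof where the paper gives only a citation. What you gain is self-containment and an explicit explanation of the characteristic~$2$ phenomenon that motivates the lemma's use later in the paper; what the citation buys is brevity. If you want to tighten your version, the only place worth a second pass is the verification of $\theta\theta'$, which you already flag; everything else is immediate.
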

\begin{proof}
See \cite[Section 1, pag.53]{kap1972fields}.
\end{proof}

In order to prove that a cubic polynomial is irreducible, we also need the following results.
\begin{lem}\label{lem:cubicroots1}
If $u\in \mathrm{GF}(2,2\cdot 3^{n})$
and $c:=u+u^{-1}\in \mathrm{GF}(2,2\cdot 3^{n-1})$, then $u\in \mathrm{GF}(2,2\cdot 3^{n-1})$.
\end{lem}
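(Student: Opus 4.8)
The plan is to exploit the tower structure of the extensions $\mathrm{GF}(2,2\cdot 3^{n})/\mathrm{GF}(2,2\cdot 3^{n-1})$, which is a cyclic (Galois) extension of degree $3$, since $2\cdot 3^{n} = 3\cdot(2\cdot 3^{n-1})$ and finite extensions of finite fields are cyclic. Write $k = \mathrm{GF}(2,2\cdot 3^{n-1})$ and $L = \mathrm{GF}(2,2\cdot 3^{n})$, and let $\sigma$ be a generator of $\mathrm{Gal}(L/k)$, so $\sigma$ has order $3$ and $\sigma(x) = x^{|k|}$ for $x \in L$. Since $c = u + u^{-1}$ lies in $k$, it is fixed by $\sigma$, so $\sigma(u) + \sigma(u)^{-1} = u + u^{-1}$, which rearranges (multiplying through by $u\,\sigma(u) \ne 0$) to $(u - \sigma(u))(1 - u^{-1}\sigma(u)^{-1}) = 0$ — more cleanly, $u\,\sigma(u)\bigl(\sigma(u)+\sigma(u)^{-1}\bigr) = u\,\sigma(u)\bigl(u+u^{-1}\bigr)$ gives $\sigma(u)^2 u + u\,\sigma(u)^{-1}\cdot u\sigma(u) \cdots$ — better to argue directly: from $\sigma(u) + \sigma(u)^{-1} = u + u^{-1}$ we get that $\sigma(u)$ is a root of the same quadratic $t^2 + ct + 1 = 0$ over $k$ of which $u$ is a root. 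Hence $\sigma(u) \in \{u, u^{-1}\}$.

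First I would dispose of the case $\sigma(u) = u$: then $u$ is fixed by a generator of $\mathrm{Gal}(L/k)$, hence $u \in k$ and we are done. So assume $\sigma(u) = u^{-1}$. Applying $\sigma$ again, $\sigma^2(u) = \sigma(u^{-1}) = \sigma(u)^{-1} = u$. But $\sigma$ has order $3$, so $\sigma^3(u) = u$ as well, while $\sigma^3 = \sigma \circ \sigma^2$ gives $\sigma^3(u) = \sigma(u) = u^{-1}$. Combining, $u = u^{-1}$, i.e. $u^2 = 1$, so $u = 1$ (characteristic $2$), which certainly lies in $k$. In either case $u \in \mathrm{GF}(2,2\cdot 3^{n-1})$, as claimed.

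The only mild subtlety — and the one place to be careful — is the passage "$\sigma(u)$ satisfies the same quadratic as $u$, hence $\sigma(u)\in\{u,u^{-1}\}$": this uses that $u$ and $u^{-1}$ are precisely the two roots of $t^2+ct+1$ (their product is $1$, their sum is $c$), which is valid whether or not this quadratic is irreducible over $k$, and whether or not $u = u^{-1}$. If $u = u^{-1}$ the quadratic is $(t+u)^2$ and the conclusion $\sigma(u)=u$ is immediate; otherwise the two roots are distinct and the argument above applies verbatim. I do not expect any real obstacle here; the statement is essentially a one-line Galois-descent argument once the degree-$3$ cyclic structure is made explicit.
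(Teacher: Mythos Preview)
Your proof is correct but follows a genuinely different route from the paper's. The paper argues by contradiction in two lines: if $u\notin k:=\mathrm{GF}(2,2\cdot 3^{n-1})$, then $t^2+ct+1$ is the minimal polynomial of $u$ over $k$, so $u$ lies in the quadratic extension $\mathrm{GF}(2,4\cdot 3^{n-1})$; but $u$ also lies in $L=\mathrm{GF}(2,2\cdot 3^{n})$, and $\mathrm{GF}(2,4\cdot 3^{n-1})\cap \mathrm{GF}(2,2\cdot 3^{n})=\mathrm{GF}(2,2\cdot 3^{n-1})$ since $\gcd(4\cdot 3^{n-1},2\cdot 3^n)=2\cdot 3^{n-1}$, a contradiction. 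In other words, the paper exploits the incompatibility between the degree-$2$ extension generated by the quadratic and the degree-$3$ extension $L/k$.

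You instead work entirely inside $L/k$ via its cyclic Galois group of order $3$: the generator $\sigma$ must send $u$ to another root of $t^2+ct+1$, so $\sigma(u)\in\{u,u^{-1}\}$, and the case $\sigma(u)=u^{-1}$ forces $\sigma^2(u)=u$ and hence (using $\sigma^3=\mathrm{id}$) $u=u^{-1}$, i.e.\ $u=1$. Both arguments ultimately hinge on the same parity mismatch (an order-$3$ automorphism acting on a two-element set), but the paper's field-intersection argument is a bit more economical, while yours is more explicit and self-contained, not requiring the reader to recall the $\gcd$ description of intersections of finite fields. Either way the result is immediate; your write-up would benefit from trimming the exploratory algebra before ``better to argue directly''.
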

\begin{proof}
If $u\notin \mathrm{GF}(2,2\cdot 3^{n-1})$, then $x^2+cx+1$ is the minimum polynomial of
$u$ over $\mathrm{GF}(2,2\cdot 3^{n-1})$. So $u\in \mathrm{GF}(2,2\cdot 3^{n})\cap\mathrm{GF}(2,4\cdot 3^{n-1})=\mathrm{GF}(2,2\cdot 3^{n-1})$ and we get a contradiction.
\end{proof}
\begin{lem}\label{lem:cubicroots2}
Let $u^3\in \mathrm{GF}(2,2\cdot 3^{n-1})$ be a root of the quadratic polynomial $x^2+tx+1$, with $t\in \mathrm{GF}(2,2\cdot 3^{n-1})$. Then $y:=u+u^{-1}\in \mathrm{GF}(2,2\cdot 3^{n})$ is a root of the cubic polynomial $x^3+x+t$, and furthermore $y\in \mathrm{GF}(2,2\cdot 3^{n-1})$ if and only if $u\in \mathrm{GF}(2,2\cdot 3^{n-1})$.
\end{lem}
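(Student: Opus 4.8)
The plan is to reduce everything to identities among $u$, $u^{-1}$ and $t$ in characteristic $2$, and then to quote Lemma~\ref{lem:cubicroots1} for the equivalence. First note that $u\neq 0$, since $u^3$ is a root of $x^2+tx+1$, a polynomial with nonzero constant term. Dividing the relation $(u^3)^2+tu^3+1=0$ by $u^3$ gives $u^3+u^{-3}=t$ (recall $-t=t$ in characteristic $2$). Expanding with $(a+b)^2=a^2+b^2$,
\[
y^3=(u+u^{-1})^3=(u+u^{-1})(u^2+u^{-2})=u^3+u^{-1}+u+u^{-3}=(u^3+u^{-3})+(u+u^{-1})=t+y,
\]
so $y^3+y+t=0$; as $t\in\mathrm{GF}(2,2\cdot 3^{n-1})$, this is exactly the claim that $y$ is a root of $x^3+x+t\in\mathrm{GF}(2,2\cdot 3^{n-1})[x]$.

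Next I would pin down the field containing $u$, and hence $y$. Since $u^3\in\mathrm{GF}(2,2\cdot 3^{n-1})$, the element $u$ is a root of $x^3+u^3\in\mathrm{GF}(2,2\cdot 3^{n-1})[x]$, so $[\mathrm{GF}(2,2\cdot 3^{n-1})(u):\mathrm{GF}(2,2\cdot 3^{n-1})]\in\{1,2,3\}$. I claim the value $2$ cannot occur. Because $2\cdot 3^{n-1}$ is even, $\mathrm{GF}(2,2\cdot 3^{n-1})$ contains the cube roots of unity, so the ratio of any two roots of $x^3+u^3$ lies in $\mathrm{GF}(2,2\cdot 3^{n-1})$; hence if $x^3+u^3$ has a root in $\mathrm{GF}(2,2\cdot 3^{n-1})$ it splits completely there and $u$ has degree $1$, while otherwise it is irreducible and $u$ has degree $3$. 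In both cases $\mathrm{GF}(2,2\cdot 3^{n-1})(u)\subseteq\mathrm{GF}(2,2\cdot 3^{n})$, so $u\in\mathrm{GF}(2,2\cdot 3^{n})$ and therefore $y=u+u^{-1}\in\mathrm{GF}(2,2\cdot 3^{n})$.

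Finally, for the equivalence: if $u\in\mathrm{GF}(2,2\cdot 3^{n-1})$ then clearly $y=u+u^{-1}\in\mathrm{GF}(2,2\cdot 3^{n-1})$. Conversely, suppose $y\in\mathrm{GF}(2,2\cdot 3^{n-1})$. Having already established that $u\in\mathrm{GF}(2,2\cdot 3^{n})$ and $u+u^{-1}=y\in\mathrm{GF}(2,2\cdot 3^{n-1})$, I can apply Lemma~\ref{lem:cubicroots1} directly to conclude $u\in\mathrm{GF}(2,2\cdot 3^{n-1})$. The only step with any subtlety is excluding the quadratic subextension in the middle paragraph; everything else is routine characteristic-$2$ algebra together with the cited lemma.
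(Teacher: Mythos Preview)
Your proof is correct and follows essentially the same approach as the paper: the first claim is the characteristic-$2$ Cardano identity (which the paper simply cites as ``Cardano's formula''), and the equivalence is deduced from Lemma~\ref{lem:cubicroots1} exactly as the paper does. Your argument is in fact more complete than the paper's, since you explicitly verify that $u\in\mathrm{GF}(2,2\cdot 3^{n})$ (needed to invoke Lemma~\ref{lem:cubicroots1}), a point the paper leaves implicit.
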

\begin{proof}
This is Cardano's formula for solving cubic equations in even characteristic. The second statement follows by Lemma \ref{lem:cubicroots1} taking $y=c=u+u^{-1}$.
\end{proof}

\section{Towers in odd characteristic}\label{sec:odd}

In order to find good towers we restrict our search to polynomials $f(x,y)=y^2+y-v(x)$, with $v(x)\in\mathrm{GF}(q,1)[x]$ being a non-zero polynomial, which satisfy Condition \textbf{(1)} below and at least one of the last two conditions:
\begin{description}
	\item[(1)] $\frac{f(x_{n - 1},0)}{x_{n - 1}}$ is a square in $ \mathrm{GF}(q,2^{n-1})$ for $n \geq 2$;
	\item[(2)] $\frac{g(\delta_{n - 1},0)}{x_{n - 1}}$ is a square in $\mathrm{GF}(q,2^{n-1})$ for $n \geq 2$;
	\item[(2')] $\frac{g(\delta_{n - 1},0)}{\delta_{n - 1}}$ is a square in $\mathrm{GF}(q,2^{n-1})$ for $n \geq 2$.
\end{description}
\begin{rem}\label{BurkhartConditionsComparison}
Condition \textbf{(2')} above is satisfied by other towers of fields in the literature, see for example \cite[Section 4, formula (5)]{burkhart2009finite}. We don't know whether the corresponding tower (see \cite[Section 2, equation (2)]{burkhart2009finite}), which does not satisfies Condition \textbf{(1)} above, satisfies a
suitable analog of this condition which ensure
that Proposition \ref{general-odd} below holds.
\end{rem}
\begin{rem} These conditions are not sufficient for obtaining high order elements from each tower, but, for our particular choices of $f$, they are sufficient to construct a recursive tower defined by $f(x_{n - 1},x_{n})$ as Proposition
\ref{general-odd} below shows.
\end{rem}
The following key proposition ensures that all the polynomials $f(x_{n - 1}, x_{n})$ listed in Section \ref{sec:list-odd} define infinite towers of fields. In particular it shows that $[K_n : K_{n - 1}] = 2$, for all $n > 1$. The argument of the proof is the corresponding analogue of
\cite[Proposition~1]{burkhart2009finite}
but it could be applied to many different towers.
\begin{prop}\label{general-odd}
Let $v(x) \in \mathrm{GF}(q,1)[x]$ be a polynomial and assume that $f(x_{n - 1},x_n) = x_n^2 + x_n - v(x_{n - 1})$ satisfies Conditions \textbf{(1)} \textit{and} \textbf{(2)}, or Conditions \textbf{(1)} \textit{and} \textbf{(2')}. If $x_{n - 1}$ and $\delta_{n - 1}$ are not squares in the multiplicative group $\mathrm{GF}(q,2^{n - 1})^*$ for a suitable $n \ge 2$, then $x_j$ and $\delta_j$ are not squares in the multiplicative group $\mathrm{GF}(q,2^j)^*$, for $j \ge n$.
\end{prop}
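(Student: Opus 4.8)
The plan is to argue by induction on $j \ge n$, proving at each step that $x_j$ and $\delta_j$ are non-squares in $\mathrm{GF}(q,2^j)^*$. The base case $j = n-1$ (hence $j=n$ will follow from the inductive step with $j-1=n-1$) is the hypothesis, so it suffices to show: if $x_{j-1}$ and $\delta_{j-1}$ are non-squares in $\mathrm{GF}(q,2^{j-1})^*$, then $x_j$ and $\delta_j$ are non-squares in $\mathrm{GF}(q,2^j)^*$. Recall that an element $\alpha \in \mathrm{GF}(q,2^j)^*$ is a square if and only if its norm $\mathrm{N}(\alpha) = \alpha^{(q^{2^{j-1}}+1)}$ down to $\mathrm{GF}(q,2^{j-1})^*$ is a square there (this is the standard fact that the norm surjects onto squares below exactly when the element upstairs is a square; more precisely $\alpha$ is a square iff $\alpha^{(q^{2^j}-1)/2}=1$, and $(q^{2^j}-1)/2 = \frac{q^{2^{j-1}}+1}{?}\cdots$ — I would phrase it via: $\mathrm{GF}(q,2^j)^*/(\mathrm{GF}(q,2^j)^*)^2$ is detected by the norm map composed with the analogous quotient downstairs, because the squaring index is $2$ at every level and the norm is surjective). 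So the whole argument reduces to computing $\mathrm{N}(x_j)$ and $\mathrm{N}(\delta_j)$ modulo squares in $\mathrm{GF}(q,2^{j-1})^*$.

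Next I would compute these norms explicitly. Since $x_j^2 + x_j = v(x_{j-1})$, the two conjugates of $x_j$ over $\mathrm{GF}(q,2^{j-1})$ are the two roots of $y^2 + y - v(x_{j-1})$, whose product is $-v(x_{j-1})$; hence $\mathrm{N}(x_j) = -v(x_{j-1}) = f(x_{j-1},0)$ up to sign, and more usefully $\mathrm{N}(x_j) = \frac{f(x_{j-1},0)}{x_{j-1}} \cdot x_{j-1}$ (valid since $x_{j-1}\neq 0$, which I would note separately). By Condition \textbf{(1)}, $\frac{f(x_{j-1},0)}{x_{j-1}}$ is a square in $\mathrm{GF}(q,2^{j-1})$, so $\mathrm{N}(x_j) \equiv x_{j-1}$ modulo squares, and since $x_{j-1}$ is a non-square by the inductive hypothesis, $\mathrm{N}(x_j)$ is a non-square, hence $x_j$ is a non-square in $\mathrm{GF}(q,2^j)^*$. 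For $\delta_j$ I would run the parallel computation using $g(\delta_{j-1},\delta_j)=0$: the norm of $\delta_j$ down to $\mathrm{GF}(q,2^{j-1})$ is (up to the leading coefficient / sign, which one checks is a square or can be absorbed) the constant term $g(\delta_{j-1},0)$, so $\mathrm{N}(\delta_j) = \frac{g(\delta_{j-1},0)}{x_{j-1}}\cdot x_{j-1}$ or $\frac{g(\delta_{j-1},0)}{\delta_{j-1}}\cdot \delta_{j-1}$ depending on which of \textbf{(2)} or \textbf{(2')} is assumed; in either case that first factor is a square by the relevant condition, and the remaining factor ($x_{j-1}$ or $\delta_{j-1}$) is a non-square by the inductive hypothesis. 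Thus $\mathrm{N}(\delta_j)$, hence $\delta_j$, is a non-square.

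There is one genuine subtlety I would need to address carefully, and I expect it to be the main obstacle: the argument above treats $\delta_j$ as though it generically has degree $2$ over $\mathrm{GF}(q,2^{j-1})$ and that $g$ is a genuine quadratic (or at least that the relevant norm is the constant term of $g$ viewed in $\delta_j$), but one must verify that $g$ really does express $\mathrm{N}(\delta_j)$ cleanly, i.e. that $\delta_j$ satisfies a monic quadratic over $\mathrm{GF}(q,2^{j-1})$ whose constant term is (a square times) $g(\delta_{j-1},0)$. Because $\delta_n = 1 + 4v(x_n)$ and $\delta_{n-1}=1+4v(x_{n-1})$, one gets $g$ by eliminating $x_{n-1}$ from $\delta_n = 1+4v(x_n)$, $x_n^2+x_n = v(x_{n-1})$, and $\delta_{n-1}=1+4v(x_{n-1})$; tracking the leading coefficient and constant term of the resulting relation, and checking the leading coefficient is a square (or constant in $\mathrm{GF}(q,1)^*$ and handled case by case as in the list of towers), is the place where the "small degree of $v$" and the explicit shape of each tower really enter. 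I would also need to handle the edge cases $v(x_{j-1}) = 0$ or $g(\delta_{j-1},0)=0$ — but these are excluded precisely because they would force $x_j$ or $\delta_j$ into a proper subfield in a way incompatible with the non-square hypothesis, or they are ruled out directly by Conditions \textbf{(1)}, \textbf{(2)}, \textbf{(2')} (a square cannot be $0$ divided by a nonzero element unless the numerator vanishes, which one excludes). Apart from this bookkeeping, the proof is a clean induction powered by the multiplicativity of the norm and the three square conditions.
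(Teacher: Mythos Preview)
Your approach matches the paper's: induct, and at each step show $x_j$ and $\delta_j$ are non-squares by showing their norms down to $\mathrm{GF}(q,2^{j-1})$ are non-squares, using Conditions \textbf{(1)}, \textbf{(2)}, \textbf{(2')} to reduce to the inductive hypothesis. Your norm computation for $x_j$ is exactly the paper's (once you note, as the paper does explicitly, that $\delta_{j-1}$ non-square makes $f(x_{j-1},y)$ irreducible, so $\mathrm{N}(x_j)=f(x_{j-1},0)$ on the nose, with no sign ambiguity).

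Where you diverge is in the ``subtlety'' you flag for $\delta_j$: you need $\delta_j\notin\mathrm{GF}(q,2^{j-1})$ so that $g(\delta_{j-1},y)$ is its minimal polynomial and $\mathrm{N}(\delta_j)=g(\delta_{j-1},0)$. You propose to handle this by inspecting the leading coefficient of $g$ tower by tower, but that is not where the difficulty lies (the relation $g$ is automatically monic quadratic in $\delta_j$, since $\delta_j=1+4v(x_j)$ with $x_j$ quadratic over the subfield); the real question is whether $\delta_j$ could accidentally land in the subfield. The paper resolves this uniformly, inside the same inductive step, with no tower-specific work: having just shown that $x_j$ is a non-square in $\mathrm{GF}(q,2^j)$, apply Condition \textbf{(1)} at level $j$ (not $j-1$) to get that $f(x_j,0)/x_j$ is a square in $\mathrm{GF}(q,2^j)$; since $-1$ is also a square there (as $4\mid q^{2^j}-1$), it follows that $v(x_j)=-f(x_j,0)$ is a non-square in $\mathrm{GF}(q,2^j)$. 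In particular $v(x_j)\notin\mathrm{GF}(q,2^{j-1})$ (every subfield element becomes a square in a quadratic extension), hence $\delta_j=1+4v(x_j)\notin\mathrm{GF}(q,2^{j-1})$. This closes the gap cleanly and shows that, contrary to your expectation, neither the small degree of $v$ nor the explicit shape of each tower enters the argument for the Proposition itself.
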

\begin{proof}
The element $x_{n}$ is not in $\mathrm{GF}(q,2^{n - 1})$ because $\delta_{n - 1}$ is not a square in $\mathrm{GF}(q,2^{n - 1})$, so $f(x_{n - 1},y)$ is the minimal polynomial of $x_n$. We need to ensure that $x_{n}^{(q^{2^n} - 1)/2} = -1$. As in \cite[Proposition 1]{burkhart2009finite}, we obtain:  
\begin{align*}
x_{n}^{(q^{2^n} - 1)/2} &= (x_{n}^{q^{2^{n - 1}} + 1})^{(q^{2^{n - 1}} - 1)/2} = \mathrm{N}(x_n)^{(q^{2^{n - 1}} - 1)/2} = \\ &= f(x_{n - 1},0)^{(q^{2^{n - 1}} - 1)/2}= -1,
\end{align*}
where $\mathrm{N}(x_n) = x_n^{q^{2^{n - 1}} + 1} = f(x_{n - 1},0)$ is the norm of $x_n$ over $\mathrm{GF}(q,2^{n - 1})$ and we use Condition \textbf{(1)} in last equality
to show that $f(x_{n - 1},0)$ is not a square in $\mathrm{GF}(q,2^{n - 1})$ for $n > 1$.

Consider the discriminant $\delta_n$. Again $g(\delta_{n - 1},y)$ is the minimal polynomial of $\delta_n = 1 + 4v(x_n)$. Since, in $\mathrm{GF}(q,2^{n})$, we know that $\frac{f(x_n,0)}{x_n}$ is a square by Condition \textbf{(1)}, $-1$ is a square and $x_n$ is not a square as above, then $v(x_n) = -f(x_n,0)$ is not a square
in $\mathrm{GF}(q,2^{n })$. 
Hence, $\delta_n \notin \mathrm{GF}(q,2^{n - 1})$. The same computation as above yields:
\begin{align*}
\delta_{n}^{(q^{2^n} - 1)/2} &= (\delta_{n}^{q^{2^{n - 1}} + 1})^{(q^{2^{n-1}}-1)/2} = \mathrm{N}(\delta_n)^{(q^{2^{n - 1}} - 1)/2}= \\ 
&= g(\delta_{n - 1},0)^{(q^{2^{n - 1}} -1)/2} = -1,
\end{align*}
where we use Condition \textbf{(2)}, respectively \textbf{(2')}, in last equality to show that $g(\delta_n,0)$ is not a square in $\mathrm{GF}(q,2^{n - 1})$, because $x_{n - 1}$, (respectively $\delta_{n - 1}$), is a non-square by hypothesis. It follows that $x_{n}$ and $\delta_{n}$ are non-squares in $\mathrm{GF}(q,2^n)$. Repeating the same argument, we find that $x_{j}$ and $\delta_{j}$ are not squares in $\mathrm{GF}(q,2^{j})$, for all $j > n$, which completes the proof.
\end{proof}

The importance of this proposition is evident if we consider Corollary~\ref{bound-odd} below, which is an analogue of \cite[Proposition 2]{burkhart2009finite}. We first state the following property of the norm  that is used in the proof of the corollary.
\begin{lem}\label{lem:factor_odd} Let $n\geq 2$ and $j<n$ be positive integers, then
$$\frac{\mathrm{N}_{n,j}(x_n)}{x_{n - j}} =\prod_{k=1}^{j}\mathrm{N}_{n-k,j-k}\left(\frac{\mathrm{N}_{n-k+1,1}(x_{n-k+1})}{x_{n - k}}\right).$$
Moreover $\frac{\mathrm{N}_{n,j}(x_n)}{x_{n - j}}$ is a square in $\mathrm{GF}(q,2^{n-j})$.
\end{lem}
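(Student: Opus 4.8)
The plan is to establish the product formula by induction on $j$, using transitivity of the norm together with the multiplicativity of $\mathrm{N}_{n,j}$, and then to deduce the square claim from Condition \textbf{(1)} applied at each level. For the base case $j=1$ the formula reads $\mathrm{N}_{n,1}(x_n)/x_{n-1} = \mathrm{N}_{n,1}(x_n)/x_{n-1}$, which is a tautology; so the real content is the inductive step. Assuming the formula holds for $j-1$ at every appropriate level, I would write $\mathrm{N}_{n,j} = \mathrm{N}_{n-1,j-1}\circ \mathrm{N}_{n,1}$ by transitivity of the norm in the tower $\mathrm{GF}(q,2^n)/\mathrm{GF}(q,2^{n-1})/\mathrm{GF}(q,2^{n-j})$. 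Thus
\begin{align*}
\frac{\mathrm{N}_{n,j}(x_n)}{x_{n-j}} &= \frac{\mathrm{N}_{n-1,j-1}\bigl(\mathrm{N}_{n,1}(x_n)\bigr)}{x_{n-j}} = \mathrm{N}_{n-1,j-1}\!\left(\frac{\mathrm{N}_{n,1}(x_n)}{x_{n-1}}\right)\cdot \frac{\mathrm{N}_{n-1,j-1}(x_{n-1})}{x_{n-j}},
\end{align*}
where in the second equality I pulled the scalar $x_{n-1}$ through the norm $\mathrm{N}_{n-1,j-1}$ — this is legitimate only because $x_{n-1}\in\mathrm{GF}(q,2^{n-1})$, the source field of that norm, and then one uses $\mathrm{N}(ab)=\mathrm{N}(a)\mathrm{N}(b)$. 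The first factor is the $k=1$ term of the desired product; the second factor is $\mathrm{N}_{n-1,j-1}(x_{n-1})/x_{(n-1)-(j-1)}$, to which I apply the inductive hypothesis (with $n$ replaced by $n-1$ and $j$ by $j-1$), yielding $\prod_{k=1}^{j-1}\mathrm{N}_{n-1-k,j-1-k}\bigl(\mathrm{N}_{n-k,1}(x_{n-k})/x_{n-k-1}\bigr)$. Re-indexing $k\mapsto k+1$ turns this into the terms $k=2,\dots,j$ of the product in the statement, completing the induction.

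For the square assertion, observe that each factor $\mathrm{N}_{n-k,j-k}\bigl(\mathrm{N}_{n-k+1,1}(x_{n-k+1})/x_{n-k}\bigr)$ is a norm, hence it suffices to show that its argument $\mathrm{N}_{n-k+1,1}(x_{n-k+1})/x_{n-k} = \mathrm{N}(x_{n-k+1})/x_{n-k}$ is a square in $\mathrm{GF}(q,2^{n-k})$ — indeed, norms of squares are squares, and more simply the norm map takes the subgroup of squares into the subgroup of squares. But by the computation already carried out in the proof of Proposition~\ref{general-odd}, $\mathrm{N}(x_m) = x_m^{q^{2^{m-1}}+1} = f(x_{m-1},0)$, so $\mathrm{N}(x_{n-k+1})/x_{n-k} = f(x_{n-k},0)/x_{n-k}$, and this is precisely the quantity that Condition \textbf{(1)} declares to be a square in $\mathrm{GF}(q,2^{n-k})$ (valid since $n-k\geq 1$ as $k\leq j<n$). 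Therefore every factor in the product is a square in $\mathrm{GF}(q,2^{n-j})$, and so is the product.

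The only delicate point is keeping the level bookkeeping straight: one must check that Condition \textbf{(1)} is being invoked at index $n-k$ with $n-k\geq 1$ for all $k=1,\dots,j$ (which holds because $j<n$), and that the norm-transitivity and the "pull scalars through the norm" steps are applied with the element lying in the correct source field. I expect this indexing — rather than any conceptual difficulty — to be the main place where care is needed; the algebraic ingredients (norm transitivity, multiplicativity, norms preserve squares, and the identity $\mathrm{N}(x_m)=f(x_{m-1},0)$ from the previous proof) are all routine once the induction is set up.
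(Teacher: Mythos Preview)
Your proposal is correct and follows essentially the same inductive scheme as the paper: peel off the first norm step via $\mathrm{N}_{n,j}=\mathrm{N}_{n-1,j-1}\circ\mathrm{N}_{n,1}$, use multiplicativity to separate out the $k=1$ factor, and apply the inductive hypothesis to $\mathrm{N}_{n-1,j-1}(x_{n-1})/x_{n-j}$. The paper carries out the same manipulation by writing the norms as explicit exponents rather than invoking transitivity and multiplicativity abstractly, but the structure of the argument and the indexing are identical; your treatment of the square claim (each factor is the norm of $f(x_{n-k},0)/x_{n-k}$, a square by Condition~\textbf{(1)}) is exactly what the paper's one-line ``follows by Condition~\textbf{(1)}'' means.
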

\begin{proof} The case $j=1$ is trivial. By induction on $j$, let $j\geq 2$ and assume the result holds for $j-1$, then 
\allowdisplaybreaks
\begin{align*}
&\frac{\mathrm{N}_{n,j}(x_n)}{x_{n-j}}= \frac{x_n^{(q^{2^{n-1}}+1)\prod_{i=2}^{j}(q^{2^{n-i}}+1)}}{x_{n-j}}= \\
&=\left(\frac{x_n^{q^{2^{n-1}}+1}}{x_{n-1}}\right)^{\prod_{i=2}^{j}(q^{2^{n-i}}+1)}\frac{x_{n-1}^{\prod_{i=2}^{j} (q^{2^{n-i}}+1)}}{x_{n-j}}= \\
&=\left(\frac{\mathrm{N}_{n,1}(x_n)}{x_{n-1}}\right)^{\prod_{i=2}^{j}(q^{2^{n-i}}+1)}\frac{\mathrm{N}_{n-1,j-1}(x_{n-1})}{x_{n-j}}= \\
&=\left(\frac{\mathrm{N}_{n,1}(x_n)}{x_{n-1}}\right)^{\prod_{i=1}^{j-1}(q^{2^{n-1-i}}+1)}
\prod_{k=1}^{j-1}\mathrm{N}_{n-k-1,j-k-1}\left(\frac{\mathrm{N}_{n-k,1}(x_{n-k})}{x_{n-k-1}}\right)= \\
&=\mathrm{N}_{n-1,j-1}\left(\frac{\mathrm{N}_{n,1}(x_{n})}{x_{n-1}}\right)
\prod_{k=2}^{j}\mathrm{N}_{n-k,j-k}\left(\frac{\mathrm{N}_{n-k+1,1}(x_{n-k+1})}{x_{n-k}}\right).
\end{align*}
\allowdisplaybreaks[0]
The remaining part of the proof follows by Condition \textbf{(1)}.
\end{proof}
\begin{cor}\label{bound-odd}
Let $v(x)$ be a polynomial in $\mathrm{GF}(q,1)[x]$ and assume that \\  $f(x_{n - 1}, x_n) = x_n^2 + x_n - v(x_{n - 1})$ satisfies Conditions \textbf{(1)} and \textbf{(2)}, or Conditions \textbf{(1)} and \textbf{(2')}, and that $x_1$ and $\delta_1$ are not squares in $\mathrm{GF}(q,2)$. Then $x_n^2 \notin \mathrm{GF}(q,2^{n-1})$ and the order of $x_n$ is greater than
	$$2^{\frac{1}{2}(n^2 + 3n) + \mathrm{ord}_2(q - 1) - 2},$$
for all $n> 1$. The same lower bound also holds for the order of $\delta_n$ if $\delta_n^2 \notin \mathrm{GF}(q,2^{n - 1})$  for all $n > 1$.
\end{cor}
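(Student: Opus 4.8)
The plan is to follow the strategy of \cite[Proposition~2]{burkhart2009finite}, feeding into it Proposition~\ref{general-odd}, Lemma~\ref{lem:factor_odd} and the elementary coprimality input of Lemma~\ref{lem:estimate}. \emph{Step 1 ($x_m^2\notin\mathrm{GF}(q,2^{m-1})$ for every $m\ge 2$, in particular the first assertion).} Since $x_1$ and $\delta_1$ are non-squares, Proposition~\ref{general-odd} shows that $x_m$ and $\delta_m$ are non-squares in $\mathrm{GF}(q,2^m)^*$ for all $m\ge 1$. As $\delta_{m-1}=1+4v(x_{m-1})$ is a non-square in $\mathrm{GF}(q,2^{m-1})$, the polynomial $f(x_{m-1},y)=y^2+y-v(x_{m-1})$ is irreducible over $\mathrm{GF}(q,2^{m-1})$, hence it is the minimal polynomial of $x_m$; in particular $x_m\notin\mathrm{GF}(q,2^{m-1})$. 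If $x_m^2$ belonged to $\mathrm{GF}(q,2^{m-1})$, the monic degree-two polynomial $y^2-x_m^2$ would annihilate $x_m$ and therefore equal $f(x_{m-1},y)$; comparing the coefficients of $y$ gives $1=0$ in $\mathrm{GF}(q,1)$, impossible for odd $q$. (For $\delta_m$, the same input is exactly the extra hypothesis $\delta_m^2\notin\mathrm{GF}(q,2^{m-1})$ appearing in the $\delta_n$ part of the statement.)

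\emph{Step 2 (lower bound for $\mathrm{ord}(x_n)$).} Put $N=q^{2^n}-1$ and write $x_n=g^c$ for a generator $g$ of $\mathrm{GF}(q,2^n)^*$. Since $x_n$ is a non-square, $c$ is odd, hence $\gcd(N,c)$ is odd and $2^{\mathrm{ord}_2(N)}\mid\mathrm{ord}(x_n)$; a direct computation gives $\mathrm{ord}_2(N)=\mathrm{ord}_2(q-1)+\mathrm{ord}_2(q+1)+(n-1)\ge\mathrm{ord}_2(q-1)+n$. Applying Lemma~\ref{lem:estimate} with $\ell=2$ and $a=q$, the integers $\tfrac12(q^{2^b}+1)$, for $b=1,\dots,n-1$, are odd, pairwise coprime, and each of their prime divisors exceeds $2^{b+1}$. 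The crucial point, call it $(\star)$, is that for every $b\in\{1,\dots,n-1\}$ the integer $\tfrac12(q^{2^b}+1)$ does not divide $c$. Granting $(\star)$, each $b$ yields a prime $p_b\mid\tfrac12(q^{2^b}+1)$ with $p_b\mid\mathrm{ord}(x_n)=N/\gcd(N,c)$ and $p_b>2^{b+1}$; the $p_b$ are pairwise distinct and coprime to the above power of $2$, so
\[
\mathrm{ord}(x_n)\ \ge\ 2^{\mathrm{ord}_2(N)}\prod_{b=1}^{n-1}p_b\ >\ 2^{\mathrm{ord}_2(q-1)+n}\cdot 2^{\sum_{b=1}^{n-1}(b+1)}\ =\ 2^{\frac{1}{2}(n^2+3n)+\mathrm{ord}_2(q-1)-1},
\]
which is stronger than the claimed bound.

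\emph{Step 3 (proof of $(\star)$, the main obstacle).} Unravelling the translation into the cyclic group $\mathrm{GF}(q,2^n)^*$, the condition $\tfrac12(q^{2^b}+1)\mid c$ is equivalent to $x_n$ being a $\tfrac12(q^{2^b}+1)$-th power, and, since $(q^{2^n}-1)=\tfrac12(q^{2^b}+1)\cdot 2(q^{2^b}-1)\prod_{i=b+1}^{n-1}(q^{2^i}+1)$, this is in turn equivalent to $\mathrm{N}_{n,n-b-1}(x_n)^2\in\mathrm{GF}(q,2^b)$. For $b=n-1$ this reads $x_n^2\notin\mathrm{GF}(q,2^{n-1})$ (recall $\mathrm{N}_{n,0}(x_n)=x_n$) and was proved in Step~1. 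For $1\le b<n-1$ I would use Lemma~\ref{lem:factor_odd} to write $\mathrm{N}_{n,n-b-1}(x_n)=x_{b+1}\,r^2$ with $r\in\mathrm{GF}(q,2^{b+1})$ an explicit product of norms of square roots of the quantities $f(x_m,0)/x_m$ supplied by Condition~\textbf{(1)}; then $\mathrm{N}_{n,n-b-1}(x_n)^2=x_{b+1}^2\,r^4$, and, writing $\phi$ for the Frobenius of $\mathrm{GF}(q,2^{b+1})/\mathrm{GF}(q,2^b)$, a short computation shows that $\mathrm{N}_{n,n-b-1}(x_n)^2\in\mathrm{GF}(q,2^b)$ holds precisely when $\phi(r)^2/r^2=-x_{b+1}^{-(q^{2^b}-1)}$. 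The task is to exclude this equality, deducing it from $x_{b+1}^2\notin\mathrm{GF}(q,2^b)$ (the case $m=b+1$ of Step~1) together with the explicit shape of $r$ coming from Condition~\textbf{(1)} and Lemma~\ref{lem:factor_odd}; this compatibility check is the only delicate ingredient, the rest being bookkeeping. The statement for $\delta_n$ is entirely parallel: replace Lemma~\ref{lem:factor_odd} by the analogous factorisation of $\mathrm{N}_{n,j}(\delta_n)/x_{n-j}$ (if Condition~\textbf{(2)} holds) or of $\mathrm{N}_{n,j}(\delta_n)/\delta_{n-j}$ (if Condition~\textbf{(2')} holds), use that $\delta_n$ is a non-square by Proposition~\ref{general-odd}, and invoke the hypothesis $\delta_m^2\notin\mathrm{GF}(q,2^{m-1})$ in place of the conclusion of Step~1.
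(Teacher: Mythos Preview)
Your Steps~1 and~2 are correct and essentially match the paper's strategy; your argument for $x_m^2\notin\mathrm{GF}(q,2^{m-1})$ via comparison of minimal polynomials is a clean variant of the paper's one-line observation $x_n^2=-x_n+v(x_{n-1})\notin\mathrm{GF}(q,2^{n-1})$, and your $2$-adic bookkeeping in Step~2 is fine (even yielding a bound one power of $2$ stronger).

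The genuine gap is Step~3. You correctly reduce $(\star)$ to showing $\mathrm{N}_{n,n-b-1}(x_n)^2\notin\mathrm{GF}(q,2^b)$, and you correctly translate this into excluding the equality $\phi(r)^2/r^2=-x_{b+1}^{-(q^{2^b}-1)}$, but you then stop, declaring this ``the only delicate ingredient'' and asserting it follows from $x_{b+1}^2\notin\mathrm{GF}(q,2^b)$ together with the shape of $r$. That inference is not at all obvious: writing $z=\mathrm{N}_{n,n-b-1}(x_n)=x_{b+1}r^2$, the condition $z^2=x_{b+1}^2r^4\in\mathrm{GF}(q,2^b)$ does \emph{not} directly contradict $x_{b+1}^2\notin\mathrm{GF}(q,2^b)$, because $r^4$ has no reason to lie in $\mathrm{GF}(q,2^b)$. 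One needs to actually exploit the recursive product structure of $r^2$ coming from Lemma~\ref{lem:factor_odd}, not merely the fact that it is a square in $\mathrm{GF}(q,2^{b+1})$.

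The paper resolves this by a direct (if lengthy) computation: assuming $z^2\in\mathrm{GF}(q,2^{n-j})$ (paper's $j$ is your $n-b$), it first disposes of the case $z\in\mathrm{GF}(q,2^{n-j})$ by noting that $z$ is a non-square in $\mathrm{GF}(q,2^{n-j+1})$ via Lemma~\ref{lem:factor_odd} and Proposition~\ref{general-odd}; for the remaining case it writes out both $\mathrm{N}_{n,j}(x_n)/x_{n-j}$ and $(\mathrm{N}_{n,j-1}(x_n)/x_{n-j+1})^2$ using the explicit product formula of Lemma~\ref{lem:factor_odd}, takes their ratio, regroups using $\mathrm{N}_{n-k,j-k}=\mathrm{N}_{n-j+1,1}\circ\mathrm{N}_{n-k,j-k-1}$, and shows that the resulting identity forces the element $x_{n-j+1}\prod_{k=1}^{j-1}\mathrm{N}_{n-k,j-k-1}\bigl(f(x_{n-k},0)/x_{n-k}\bigr)$ to lie in $\mathrm{GF}(q,2^{n-j})$---a contradiction because this element is a non-square in $\mathrm{GF}(q,2^{n-j+1})$. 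This computation is exactly the ``compatibility check'' you flagged but did not carry out, and it is the substance of the proof; without it your argument is incomplete.
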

\begin{proof}
We know that $x_n\not\in \mathrm{GF}(q,2^{n-1})$ by Proposition \ref{general-odd}, therefore $x_n^2=-x_n+v(x_{n-1})\not\in \mathrm{GF}(q,2^{n-1})$ for all $n>1$. We show that the order of $x_n$ has a common factor with the odd number $\frac{q^{2^{n - j}} + 1}{2}$ proving that $x_n^{\frac{2(q^{2^n} - 1)}{q^{2^{n - j}} + 1}} \ne 1$, for $j = 1,2,\ldots,n-1$. 
For $j=1$, we have
\[
x_n^{\frac{2(q^{2^n} - 1)}{q^{2^{n - 1}} + 1}}=x_n^{2(q^{2^{n-1}}-1)} \ne 1,
\]
since $x_n^2\not\in \mathrm{GF}(q,2^{n - 1})$, as we have just seen. For $j\geq 2$, we get
\begin{align*}
x_n^{\frac{2(q^{2^n} - 1)}{q^{2^{n - j}} + 1}} &= \left(x_n^{\prod_{k = 1}^{j - 1}( q^{2^{n - k}}+1)}\right)^{2(q^{2^{n - j}} - 1)}= \mathrm{N}_{n,j - 1}(x_n)^{2(q^{2^{n - j}} - 1)}
\end{align*}
and the last member above is 1 only if $\mathrm{N}_{n,j - 1}(x_n)^2\in\mathrm{GF}(q,2^{n-j})$. We show that this is not possible. Consider $\mathrm{N}_{n,j }(x_n)=\mathrm{N}_{n-j+1,1 }(\mathrm{N}_{n,j-1 }(x_n))$. 
If $\mathrm{N}_{n,j - 1}(x_n)^2\in\mathrm{GF}(q,2^{n-j})$, then either $\mathrm{N}_{n,j }(x_n)=\mathrm{N}_{n,j - 1}(x_n)^2$ or $\mathrm{N}_{n,j }(x_n)=\mathrm{N}_{n,j - 1}(x_n)$. The latter equality is not possible since $\mathrm{N}_{n,j-1 }(x_n)$ is not a square in 
$\mathrm{GF}(q,2^{n-j+1})$ by Lemma \ref{lem:factor_odd} but 
$\mathrm{N}_{n,j }(x_n)\in \mathrm{GF}(q,2^{n-j})$ is a square in 
$\mathrm{GF}(q,2^{n-j+1})$. The former equality, by Lemma \ref{lem:factor_odd}, gives:
\allowdisplaybreaks
\begin{align*}
1&=\frac{x_{n - j} \prod_{k=1}^{j}\mathrm{N}_{n-k,j-k}\left(\frac{\mathrm{N}_{n-k+1,1}(x_{n-k+1})}{x_{n - k}}\right)}{x_{n - j+1}^2 \prod_{k=1}^{j-1}\left(\mathrm{N}_{n-k,j-k-1}\left(\frac{\mathrm{N}_{n-k+1,1}(x_{n-k+1})}{x_{n - k}}\right)\right)^2}=\\
&=\frac{x_{n - j}\frac{\mathrm{N}_{n-j+1,1}(x_{n-j+1})}{x_{n - j}} \prod_{k=1}^{j-1}\mathrm{N}_{n-k,j-k}\left(\frac{\mathrm{N}_{n-k+1,1}(x_{n-k+1})}{x_{n - k}}\right)}{x_{n - j+1}^2 \prod_{k=1}^{j-1}\left(\mathrm{N}_{n-k,j-k-1}\left(\frac{\mathrm{N}_{n-k+1,1}(x_{n-k+1})}{x_{n - k}}\right)\right)^2}=\\
&=\!\frac{\mathrm{N}_{n-j+1,1}(x_{n-j+1}) \!\prod_{k=1}^{j-1}\!\mathrm{N}_{n-j+1,1}\!\left(\!\mathrm{N}_{n-k,j-k-1}\!\left(\!\frac{\mathrm{N}_{n-k+1,1}(x_{n-k+1})}{x_{n - k}}\right)\!\right)}{x_{n - j+1}^2 \prod_{k=1}^{j-1}\left(\mathrm{N}_{n-k,j-k-1}\left(\frac{\mathrm{N}_{n-k+1,1}(x_{n-k+1})}{x_{n - k}}\right)\right)^2}\!=\\
&=\frac{(x_{n-j+1})^{q^{2^{n-j}}+1} \prod_{k=1}^{j-1}\left(\mathrm{N}_{n-k,j-k-1}\left(\frac{\mathrm{N}_{n-k+1,1}(x_{n-k+1})}{x_{n - k}}\right)\right)^{q^{2^{n-j}}+1}}{x_{n - j+1}^2 \prod_{k=1}^{j-1}\left(\mathrm{N}_{n-k,j-k-1}\left(\frac{\mathrm{N}_{n-k+1,1}(x_{n-k+1})}{x_{n - k}}\right)\right)^2}=\\
& =x_{n - j+1}^{q^{2^{n-j}}-1} \prod_{k=1}^{j-1}\left(\mathrm{N}_{n-k,j-k-1}\left(\frac{\mathrm{N}_{n-k+1,1}(x_{n-k+1})}{x_{n - k}}\right)\right)^{q^{2^{n-j}}-1}.
\end{align*}
\allowdisplaybreaks[0]
Since the last term is 1, then
$$x_{n-j+1} \prod_{k=1}^{j-1}\mathrm{N}_{n-k,j-k-1} \left(\frac{\mathrm{N}_{n-k+1,1}(x_{n-k+1})}{x_{n - k}}\right) \in\mathrm{GF}(q,2^{n-j}),$$
but this is impossible because $x_{n-j+1}$ is a non-square in $\mathrm{GF}(q,2^{n-j+1})$, by Proposition \ref{general-odd}, but 
$$\mathrm{N}_{n-k,j-k-1}\left(  \frac{\mathrm{N}_{n-k+1,1}(x_{n-k+1})}{x_{n - k}}\right)=\mathrm{N}_{n-k,j-k-1}\left(  \frac{f(x_{n-k},0)}{x_{n - k}}\right)$$ 
is a square in $\mathrm{GF}(q,2^{n-j+1})$, for each $k<j$, by Condition \textbf{(1)} and by multiplicativity of the norm.

This odd common factor ensures, by Lemma \ref{lem:estimate} with $a=q$, $b=n-j$ and $\ell=2$, the existence of a lower bound on the order of $x_n$, namely $p_j>2^{n - j + 1}$, for every $j = 1,2,\ldots,n-1$. Hence, the order is bounded below by $$2^{\frac{n(n + 1)}{2} - 1}=\prod_{j = 1}^{n - 1} 2^{n - j + 1} <\prod_{j = 1}^{n - 1}p_j .$$

The remaining term $2^{n + \mathrm{ord}_2(q - 1) - 1}$ follows as in \cite[Proposition 2]{burkhart2009finite}. 
By the repetition of the difference of squares formula, we get:
$$\mathrm{ord}_2 \left(\frac{q^{2^n} - 1}{2}\right) = \sum_{j = 0}^{n - 1} \mathrm{ord}_2(q^{2^j} + 1) + \mathrm{ord}_2(q - 1) - 1 = n + \mathrm{ord}_2(q - 1) - 1,$$
for all $n\geq 1$. It follows that $2^{n + \mathrm{ord}_2(q - 1) - 1}$ divides the order of $x_n$ because $x_n^{\frac{q^{2^n} - 1}{2}} = -1$ by Proposition \ref{general-odd}. The proof for $\delta_n$ is similar. 
\end{proof}

\section{Towers in even characteristic}\label{sec:even}

The even analogue of Conditions \textbf{(1)} and \textbf{(2)} in the odd case for polynomials $f(x,y)=y^3+y+v(x)$, with $v(x)\in \mathrm{GF}(2,1)[x]$, is:
\begin{description}
\item[(3)] There exists an integer $e\geq 0$ such that $f(x_{n-1},0)=x_{n-1}^{2^e}$ for all $n\geq 2$.
\end{description}
This means that we can restrict our study to polynomials in the form $f(x,y)=y^3+y+x^{2^e}$, with $e\geq 0$, and
deduce similar results as in the previous section. In Section \ref{sec:list-even}, we find some cases where the towers defined by  polynomials $f(x_{n - 1},x_n)$ are infinite and Galois. 
This is achieved by finding a suitable initial element $x_1\in \mathrm{GF}(2,6)$.
Under these hypotheses we have an analogue of Proposition \ref{general-odd}.
\begin{prop}\label{general-even}
Consider an infinite normal tower defined by $f(x_{n - 1},x_n) = x_n^3 + x_n + x_{n - 1}^{2^e}$ for a certain $e\geq 0$, for all $n>1$.
Let $p$ be a prime divisor of $|\mathrm{GF}(2,2\cdot 3^{n - 1})^*|$, for a suitable $n>1$, and assume that $x_{n - 1}$ is not a $p$-th power in the multiplicative group $\mathrm{GF}(2,2\cdot 3^{n - 1})^*$. Then $x_j$ is not a $p$-th power in the multiplicative group $\mathrm{GF}(2,2\cdot 3^{j})^*$, for $j \geq n$. 
\end{prop}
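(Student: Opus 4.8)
The plan is to mimic the structure of the proof of Proposition \ref{general-odd}, replacing the ``non-square'' condition by ``non-$p$-th power'' and using the norm $\mathrm{N} = \mathrm{N}_{n,1}\colon \mathrm{GF}(2,2\cdot 3^n)\to\mathrm{GF}(2,2\cdot 3^{n-1})$. First I would argue that $x_n\notin\mathrm{GF}(2,2\cdot 3^{n-1})$: since the tower is infinite and normal with $[K_n:K_{n-1}]=3$ (this is what makes $\mathrm{GF}(2,2\cdot 3^n)$ the relevant field, and is verified case by case in Section \ref{sec:list-even}), $f(x_{n-1},y)=y^3+y+x_{n-1}^{2^e}$ is the minimal polynomial of $x_n$ over $\mathrm{GF}(2,2\cdot 3^{n-1})$ once we know $x_{n-1}\ne 0$, which holds because $x_{n-1}$ is not a $p$-th power hence not zero. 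Then the norm of $x_n$ down to $\mathrm{GF}(2,2\cdot 3^{n-1})$ is, up to sign (and in characteristic $2$ signs are irrelevant), the constant term of this cubic, i.e.
\[
\mathrm{N}(x_n) = f(x_{n-1},0) = x_{n-1}^{2^e},
\]
using Condition \textbf{(3)}.

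Next I would show that $x_n$ is not a $p$-th power in $\mathrm{GF}(2,2\cdot 3^n)^*$. Write $M = |\mathrm{GF}(2,2\cdot 3^n)^*| = 4^{2\cdot 3^n}-1$ and $m = |\mathrm{GF}(2,2\cdot 3^{n-1})^*| = 4^{2\cdot 3^{n-1}}-1$, so that the norm map on units is raising to the power $M/m$, and an element $z\in\mathrm{GF}(2,2\cdot 3^n)^*$ is a $p$-th power iff $z^{M/p}=1$. Since $p\mid m$ (as $p\mid|\mathrm{GF}(2,2\cdot 3^{n-1})^*|$), I need the relation between $p\nmid$-divisibility upstairs and downstairs. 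The key point, exactly as in the odd case, is that $p$ is coprime to the cofactor $M/m = \prod_{i=1}^{1}(4^{2\cdot 3^{n-1}}+4^{3^{n-1}}+1) = 4^{2\cdot 3^{n-1}}+4^{3^{n-1}}+1$; more precisely one checks that $\gcd(p, M/m)$ does not absorb the full $p$-part, so that $z$ is a $p$-th power in the big field iff $\mathrm{N}(z)$ is a $p$-th power in the small field — this is the standard fact that the norm induces a surjection on $p$-parts of the unit groups when the degree is prime to $p$ (here the degree is $3$, and one must rule out $p=3$ separately or note $3\mid m$ forces $3\mid 4^{2\cdot 3^{n-1}}-1$ which is fine and the cofactor $4^{2\cdot 3^{n-1}}+4^{3^{n-1}}+1\equiv 1+1+1 = 0\bmod 3$, so $p=3$ would need the finer divisibility argument of Lemma \ref{lem:estimate}). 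Assuming that reduction, if $x_n$ were a $p$-th power then $\mathrm{N}(x_n) = x_{n-1}^{2^e}$ would be a $p$-th power in $\mathrm{GF}(2,2\cdot 3^{n-1})^*$; but $2^e$ is coprime to $p$ (since $p$ divides $4^{2\cdot 3^{n-1}}-1$, an odd number, so $p$ is odd), hence $x_{n-1}$ itself would be a $p$-th power there, contradicting the hypothesis. Finally, iterating this argument from $n$ up to any $j\ge n$ gives that $x_j$ is not a $p$-th power in $\mathrm{GF}(2,2\cdot 3^j)^*$.

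The main obstacle is the clean handling of the prime $p=3$, where the cofactor $4^{2\cdot 3^{n-1}}+4^{3^{n-1}}+1$ is itself divisible by $3$, so the naive ``degree coprime to $p$'' shortcut fails and one must instead invoke Lemma \ref{lem:estimate} (with $a=4$, $\ell=3$, and $b$ chosen to track the exact power of $3$) to control $\mathrm{ord}_3$ of the relevant factors and still conclude that non-$3$-divisibility propagates up the tower. A secondary point that needs care is confirming $[K_n:K_{n-1}]=3$ and normality — i.e.\ that $f(x_{n-1},y)$ is irreducible over $\mathrm{GF}(2,2\cdot 3^{n-1})$ and that its splitting field is $\mathrm{GF}(2,2\cdot 3^n)$ rather than a degree-$6$ extension; this is where Lemmas \ref{lem:quadratic_resolvent}, \ref{lem:cubicroots1} and \ref{lem:cubicroots2} enter, but since the statement already assumes ``an infinite normal tower'' I would simply cite that hypothesis and relegate the verification to the examples in Section \ref{sec:list-even}.
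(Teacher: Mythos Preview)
Your core argument is correct and matches the paper's approach: compute $\mathrm{N}(x_n)=f(x_{n-1},0)=x_{n-1}^{2^e}$, observe that $2^e$ is coprime to the odd prime $p$, and conclude by contrapositive that $x_n$ cannot be a $p$-th power. The paper writes this as the direct exponent identity
\[
x_n^{(4^{3^n}-1)/p}=\bigl(x_n^{\,4^{2\cdot 3^{n-1}}+4^{3^{n-1}}+1}\bigr)^{(4^{3^{n-1}}-1)/p}=\mathrm{N}(x_n)^{(4^{3^{n-1}}-1)/p}=\bigl(x_{n-1}^{2^e}\bigr)^{(4^{3^{n-1}}-1)/p}\neq 1,
\]
which is just your contrapositive unpacked.

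However, the ``main obstacle'' you flag for $p=3$ is a phantom. You only need the \emph{forward} implication ``$x_n$ a $p$-th power $\Rightarrow$ $\mathrm{N}(x_n)$ a $p$-th power'', which is immediate from multiplicativity of the norm and holds for \emph{every} prime $p$, including $p=3$. The ``iff'' you invoke, and with it any concern about $\gcd(p,M/m)$ or any appeal to Lemma~\ref{lem:estimate}, is irrelevant here: the backward implication (norm a $p$-th power $\Rightarrow$ element a $p$-th power) is never used. In the exponent form above, the only thing needed is that $p\mid 4^{3^{n-1}}-1$, which is the hypothesis, so the factor $(4^{3^{n-1}}-1)/p$ is an integer and the computation goes through uniformly. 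Drop the entire $p=3$ digression and the proof is clean and complete.
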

\begin{proof}
By assumption $f(x_{n-1},y)$ is irreducible, so $x_{n}\not\in\mathrm{GF}(2,2 \cdot 3^{n - 1})$ and $f(x_{n-1},y)$ is the minimum polynomial of $x_{n}$. We need to check that $x_{n}^{(4^{3^n}-1)/p}\not =1$. As in the proof of Proposition~\ref{general-odd}, we obtain:  
\begin{align*}
x_{n}^{(4^{3^n} - 1)/p} &= (x_{n}^{4^{2 \cdot 3^{n - 1}} + 4^{3^{n - 1}} + 1})^{(4^{3^{n - 1}} - 1)/p}= \\
&= \mathrm{N}(x_n)^{(4^{3^{n - 1}} - 1)/p} = f(x_{n - 1},0)^{(4^{3^{n - 1}} - 1)/p},
\end{align*}
where $\mathrm{N}(x_n) = x_{n}^{4^{2 \cdot 3^{n - 1}} + 4^{3^{n - 1}} + 1} = f(x_{n - 1},0)$ is the norm of $x_n$ over $\mathrm{GF}(2,2\cdot 3^{n-1})$. The last term is not equal to $1$ because $x_{n - 1}$ is not a $p$-th power in $\mathrm{GF}(2,2 \cdot 3^{n - 1})$, hence, by Condition \textbf{(3)}, 
$f(x_{n - 1},0)$ is not a $p$-th power as well.
\end{proof}
The analogue of Lemma \ref{lem:factor_odd} in even characteristic is the following: 
\begin{lem}\label{lem:factor_even} Let $e\geq 0$, $n\geq 2$ and $j<n$ be positive integers, then
$$\frac{\mathrm{N}_{n,j}(x_n)}{x_{n - j}^{2^{ej}}} = \prod_{k = 1}^{j} \mathrm{N}_{n - k,j-k} \left(\frac{\mathrm{N}_{n - k+1,1}(x_{n - k+1})}{x^{2^e}_{n - k }}\right)^{2^{e(k-1)}}.$$
In particular, if the tower defined by $f(x_{n-1},x_n)$ satisfies Condition \textbf{(3)} for a certain $e\geq 0$, then 
$\mathrm{N}_{n,j}(x_n)=x_{n - j}^{2^{ej}}.$
\end{lem}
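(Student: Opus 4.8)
The plan is to mimic closely the proof of Lemma~\ref{lem:factor_odd}, doing induction on $j$, with the extra bookkeeping coming from the Frobenius exponents $2^e$ and the fact that in even characteristic we are pushing things down through $\mathrm{GF}(2,2\cdot 3^n)/\mathrm{GF}(2,2\cdot 3^{n-1})$ rather than through the quadratic steps. For $j=1$ the identity is just the definition of $\mathrm{N}_{n,1}(x_n)=\mathrm{N}(x_n)$ together with the trivial convention $\mathrm{N}_{n-1,0}=\mathrm{id}$, so there is nothing to prove. For the inductive step I would write
$$
\frac{\mathrm{N}_{n,j}(x_n)}{x_{n-j}^{2^{ej}}}
=\left(\frac{\mathrm{N}_{n,1}(x_n)}{x_{n-1}^{2^e}}\right)^{\prod_{i=2}^{j}(4^{2\cdot 3^{n-i}}+4^{3^{n-i}}+1)}
\cdot
\frac{x_{n-1}^{2^e\prod_{i=2}^{j}(4^{2\cdot 3^{n-i}}+4^{3^{n-i}}+1)}}{x_{n-j}^{2^{ej}}},
$$
recognising the first factor as $\mathrm{N}_{n-1,j-1}\!\left(\mathrm{N}_{n,1}(x_n)/x_{n-1}^{2^e}\right)$ and the second as $\bigl(\mathrm{N}_{n-1,j-1}(x_{n-1})/x_{n-j}^{2^{e(j-1)}}\bigr)^{2^e}$, to which the induction hypothesis (with $n$ replaced by $n-1$ and $j$ by $j-1$) applies. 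Raising that hypothesis to the $2^e$-th power turns its $k$-th factor $\mathrm{N}_{n-k,j-k}\!\bigl(\cdots\bigr)^{2^{e(k-1)}}$ into $\mathrm{N}_{n-k,j-k}\!\bigl(\cdots\bigr)^{2^{ek}}$, i.e.\ exactly the $(k{+}1)$-st factor of the claimed formula for index $j$; together with the new $k=1$ term coming from $\mathrm{N}_{n-1,j-1}\!\left(\mathrm{N}_{n,1}(x_n)/x_{n-1}^{2^e}\right)$ (note $2^{e(1-1)}=1$) this reassembles the full product $\prod_{k=1}^{j}$.

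The only genuinely delicate point is checking that the Frobenius map $z\mapsto z^{2^e}$ commutes with the norms in the way I just used, namely $\mathrm{N}_{m,i}(z)^{2^e}=\mathrm{N}_{m,i}(z^{2^e})$ and, more to the point, that applying $\bigl(\cdot\bigr)^{2^e}$ to $\mathrm{N}_{n-k,j-k}(w)$ with $w=\mathrm{N}_{n-k+1,1}(x_{n-k+1})/x_{n-k}^{2^e}$ yields precisely $\mathrm{N}_{n-k,j-k}(w)^{2^e}$ with the same inner $w$; this is immediate since $2^e$ is a power of the characteristic, so Frobenius is a field homomorphism fixing the prime field and hence commuting with every norm. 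I also need to be slightly careful that the exponent arithmetic matches: in even characteristic $\mathrm{N}_{n,j}(x)=x^{\prod_{i=1}^{j}(4^{2\cdot 3^{n-i}}+4^{3^{n-i}}+1)}$, and one checks $\prod_{i=2}^{j}(4^{2\cdot 3^{n-i}}+4^{3^{n-i}}+1)=\prod_{i=1}^{j-1}(4^{2\cdot 3^{(n-1)-i}}+4^{3^{(n-1)-i}}+1)$ by the index shift $i\mapsto i-1$, which is what licenses reading the middle expression as $\mathrm{N}_{n-1,j-1}$ of something. I expect this index-shift/Frobenius-commutation bookkeeping to be the main (and essentially only) obstacle; it is routine but easy to get off by one.

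For the final sentence, suppose the tower satisfies Condition~\textbf{(3)}, so $\mathrm{N}_{n-k+1,1}(x_{n-k+1})=f(x_{n-k},0)=x_{n-k}^{2^e}$ for every relevant $k$. Then each argument $\mathrm{N}_{n-k+1,1}(x_{n-k+1})/x_{n-k}^{2^e}$ equals $1$, so each factor $\mathrm{N}_{n-k,j-k}(1)^{2^{e(k-1)}}=1$, the whole product is $1$, and the displayed identity collapses to $\mathrm{N}_{n,j}(x_n)/x_{n-j}^{2^{ej}}=1$, i.e.\ $\mathrm{N}_{n,j}(x_n)=x_{n-j}^{2^{ej}}$, as asserted. (Alternatively this also follows directly by induction from $\mathrm{N}_{n,j}(x_n)=\mathrm{N}_{n-j+1,1}(\mathrm{N}_{n,j-1}(x_n))$ and Condition~\textbf{(3)}, but deducing it from the product formula is the cleaner way to present it here.)
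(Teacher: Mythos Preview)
Your proposal is correct and follows essentially the same inductive argument as the paper's proof: split off the first norm step, recognise the remaining exponent as $\mathrm{N}_{n-1,j-1}$, apply the hypothesis at level $(n-1,j-1)$, raise to the $2^e$-th power, and re-index. One small slip worth flagging (you anticipated this): when you write ``its $k$-th factor $\mathrm{N}_{n-k,j-k}(\cdots)^{2^{e(k-1)}}$'', the hypothesis is being applied at $(n-1,j-1)$, so that factor is actually $\mathrm{N}_{n-1-k,\,j-1-k}(\cdots)^{2^{e(k-1)}}$; with this correction your identification with the $(k{+}1)$-st factor of the target formula goes through exactly as you claim, and the treatment of the ``in particular'' clause via Condition~\textbf{(3)} matches the paper verbatim.
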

\begin{proof} By induction on $j$. For $j=1$ the result is trivial. Let $j\geq 2$ and assume the result holds for $j-1$, then:
\begin{align*}
&\frac{\mathrm{N}_{n,j}(x_n)}{x^{2^{ej}}_{n-j}}\!=\! \frac{x_n^{(4^{2\cdot 3^{n-1}}+4^{3^{n-1}}+1)\prod_{i=2}^{j}(4^{2\cdot 3^{n-i}}+4^{3^{n-i}}+1)}}{x^{2^{ej}}_{n-j}}\!= \\
&=\!\left(\!\frac{x_n^{4^{2\cdot 3^{n-1}}+4^{3^{n-1}}+1}}{x_{n-1}^{2^e}}\!\right)^{\prod_{i=2}^{j}(4^{2\cdot 3^{n-i}}+4^{3^{n-i}}+1)}\!\!\!\left(\frac{x_{n-1}^{\prod_{i=2}^{j} (4^{2\cdot 3^{n-i}}+4^{3^{n-i}}+1)}}{x^{2^{e(j-1)}}_{n-j}}\right)^{2^e}\!\!\!= \\
&=\!\left(\frac{\mathrm{N}_{n,1}(x_n)}{x_{n-1}^{2^e}}\right)^{\prod_{i=2}^{j}(4^{2\cdot 3^{n-i}}+4^{3^{n-i}}+1)}\left(\frac{\mathrm{N}_{n-1,j-1}(x_{n-1})}{x_{n-j}^{2^{e(j-1)}}}\right)^{2^e}= \\
&=\!\mathrm{N}_{n-1,j-1}\left(\frac{\mathrm{N}_{n,1}(x_{n})}{x_{n-1}^{2^e}}\right)
\prod_{k=1}^{j-1}\mathrm{N}_{n-k-1,j-k-1}\left(\frac{\mathrm{N}_{n-k,1}(x_{n-k})}{x_{n-k-1}^{2^e}}\right)^{2^{ek}}= \\
&=\!\mathrm{N}_{n-1,j-1}\left(\frac{\mathrm{N}_{n,1}(x_{n})}{x_{n-1}^{2^e}}\right)\prod_{k=2}^{j}\mathrm{N}_{n-k,j-k}\left(\frac{\mathrm{N}_{n-k+1,1}(x_{n-k+1})}{x_{n-k}^{2^e}}\right)^{2^{e(k-1)}}.
\end{align*}
The remaining part of the proof follows by Condition \textbf{(3)}. \end{proof}
\begin{cor}\label{bound-even}
Consider an infinite normal tower defined by
$f(x_{n-1},x_n)=x^3_n+x_n+x_{n-1}^{2^e}$, 
for a certain $e\geq 0$, for all $n>1$. If $x_1$ is not a cube in $\mathrm{GF}(2,6)$, then $x_n^3 \notin \mathrm{GF}(2,2 \cdot 3^{n - 1})$ for all $n\geq 2$ and the order of $x_n$ in the tower defined by $f(x_{n - 1},x_n)$ is greater than $$3^{\frac{1}{2}(n^2 + 3n)-1}.$$
\end{cor}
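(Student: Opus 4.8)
The plan is to follow the proof of Corollary \ref{bound-odd}, replacing the prime $2$ by $3$ and all square/non-square arguments by cube/non-cube ones. We bound the order of $x_n$ from below by exhibiting pairwise coprime divisors of $|\mathrm{GF}(2,2\cdot 3^n)^*|=4^{3^n}-1$ that all divide it: a power of $3$, coming from the assumption that $x_1$ is not a cube, together with, for each $j=1,\dots,n-1$, a large prime $p_j$ that Lemma \ref{lem:estimate} extracts from the factor $4^{2\cdot 3^{n-j}}+4^{3^{n-j}}+1$ of $4^{3^n}-1$.

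First I would establish the preliminary claim $x_n^3\notin\mathrm{GF}(2,2\cdot 3^{n-1})$ for $n\geq2$. Since $4\equiv1\bmod3$, the prime $3$ divides $|\mathrm{GF}(2,2\cdot 3^m)^*|=4^{3^m}-1$ for all $m\geq0$, so taking $p=3$ and $n=2$ in Proposition \ref{general-even}, and using that $x_1$ is not a cube in $\mathrm{GF}(2,6)^*$, one gets that $x_n$ is not a cube in $\mathrm{GF}(2,2\cdot 3^n)^*$ for every $n\geq1$; in particular $x_n\neq0$. As in the proof of Proposition \ref{general-even}, irreducibility of $f(x_{n-1},y)$ forces $x_n\notin\mathrm{GF}(2,2\cdot 3^{n-1})$, and since in characteristic $2$ we have $x_n^3=x_n+x_{n-1}^{2^e}$ with $x_{n-1}^{2^e}\in\mathrm{GF}(2,2\cdot 3^{n-1})$, it follows that $x_n^3\notin\mathrm{GF}(2,2\cdot 3^{n-1})$, which is the first assertion of the Corollary.

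Next come the two divisibility inputs. Iterating $a^{3^k}-1=(a^{3^{k-1}}-1)(a^{2\cdot 3^{k-1}}+a^{3^{k-1}}+1)$ and using that $3$ divides $a^2+a+1$ exactly once whenever $a\equiv1\bmod3$, one finds $\mathrm{ord}_3(4^{3^n}-1)=n+1$, hence $\mathrm{ord}_3\big((4^{3^n}-1)/3\big)=n$. Since $3\mid4^{3^n}-1$, the element $x_n$ is a cube in $\mathrm{GF}(2,2\cdot 3^n)^*$ if and only if $x_n^{(4^{3^n}-1)/3}=1$; as it is not, $\mathrm{ord}(x_n)\nmid(4^{3^n}-1)/3$, and because every prime $\neq3$ has the same exponent in $(4^{3^n}-1)/3$ as in $4^{3^n}-1$, this forces $3^{\,n+1}\mid\mathrm{ord}(x_n)$. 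For the primes $p_j$, fix $j\in\{1,\dots,n-1\}$ and look at $x_n$ raised to the power $3(4^{3^n}-1)/(4^{2\cdot 3^{n-j}}+4^{3^{n-j}}+1)$. Using $4^{3^n}-1=(4^{3^{n-j}}-1)\prod_{i=1}^{j}(4^{2\cdot 3^{n-i}}+4^{3^{n-i}}+1)$ and the definition of $\mathrm{N}_{n,j-1}$ (reading $\mathrm{N}_{n,0}(x_n)=x_n$ when $j=1$), this element equals $\mathrm{N}_{n,j-1}(x_n)^{\,3(4^{3^{n-j}}-1)}$, which by Lemma \ref{lem:factor_even} equals $\big((x_{n-j+1}^{3})^{\,4^{3^{n-j}}-1}\big)^{2^{e(j-1)}}$. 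Since $n-j+1\geq2$, the preliminary step gives $x_{n-j+1}^{3}\notin\mathrm{GF}(2,2\cdot 3^{n-j})$, so $(x_{n-j+1}^{3})^{4^{3^{n-j}}-1}\neq1$, and raising to the power $2^{e(j-1)}$ — an iterate of the injective Frobenius — keeps it $\neq1$. Hence $\mathrm{ord}(x_n)\nmid(4^{3^n}-1)/m_j$, where $m_j:=\frac{1}{3}(4^{2\cdot 3^{n-j}}+4^{3^{n-j}}+1)$, so $\gcd(\mathrm{ord}(x_n),m_j)>1$ and we pick a prime $p_j$ dividing it. Lemma \ref{lem:estimate} with $a=4$, $\ell=3$, $b=n-j$ gives $p_j>3^{\,n-j+1}$, and its coprimality clause shows that the $m_j$, hence the $p_j$, are distinct; moreover $p_j>3$, so $p_j$ is prime to $3$.

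Finally, $3^{\,n+1}$ and the distinct primes $p_1,\dots,p_{n-1}$ are pairwise coprime and each divides $\mathrm{ord}(x_n)$, so
$$\mathrm{ord}(x_n)\ \geq\ 3^{\,n+1}\prod_{j=1}^{n-1}p_j\ \geq\ 3^{\,n+1}\prod_{j=1}^{n-1}3^{\,n-j+1}\ =\ 3^{\,n+\frac{n(n+1)}{2}}\ =\ 3^{\frac{1}{2}(n^2+3n)}\ >\ 3^{\frac{1}{2}(n^2+3n)-1},$$
which is the claimed bound. I anticipate no real obstacle here: Proposition \ref{general-even}, Lemma \ref{lem:factor_even} and Lemma \ref{lem:estimate} do all the work, and the argument turns out to be shorter than its odd-characteristic analogue, because Condition \textbf{(3)} collapses the norm expansion to the single term $\mathrm{N}_{n,j-1}(x_n)=x_{n-j+1}^{2^{e(j-1)}}$ (no ``correction'' norms survive) and characteristic $2$ eliminates all the square/non-square case distinctions. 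The only points needing a little care are the $3$-adic valuation computation and correctly rewriting the exponent $3(4^{3^n}-1)/(4^{2\cdot 3^{n-j}}+4^{3^{n-j}}+1)$ as a norm raised to a power.
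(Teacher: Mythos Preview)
Your proof is correct and follows essentially the same approach as the paper's: both establish $x_n^3\notin\mathrm{GF}(2,2\cdot 3^{n-1})$ from the recursion, then use Lemma~\ref{lem:factor_even} to rewrite $x_n^{3(4^{3^n}-1)/(4^{2\cdot 3^{n-j}}+4^{3^{n-j}}+1)}$ as $\mathrm{N}_{n,j-1}(x_n)^{3(4^{3^{n-j}}-1)}=(x_{n-j+1}^{2^{e(j-1)}})^{3(4^{3^{n-j}}-1)}\neq 1$, extract primes $p_j>3^{n-j+1}$ via Lemma~\ref{lem:estimate}, and combine with the $3$-adic contribution coming from $x_n^{(4^{3^n}-1)/3}\neq 1$. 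Your handling of the $3$-adic part is in fact slightly sharper than the paper's (you correctly deduce $3^{n+1}\mid\mathrm{ord}(x_n)$ rather than just $3^n$), giving $\mathrm{ord}(x_n)\geq 3^{(n^2+3n)/2}$, but this only strengthens the stated bound.
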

\begin{proof}
The proof is similar to the proofs of Corollary \ref{bound-odd} and \cite[Proposition 4]{burkhart2009finite}. 
We know that $x_n\not\in \mathrm{GF}(2,2\cdot 3^{n - 1})$ by
Proposition~\ref{general-even}, therefore $x^3_n=x_n+v(x_{n-1})\not \in \mathrm{GF}(2,2\cdot 3^{n - 1})$.
It follows that $\left(x^3_n\right)^{2^e}$ does not belong to $\mathrm{GF}(2,2\cdot 3^{n - 1})$.
In order to show that the order of $x_n$ has a common factor with $\textstyle \frac{1}{3}(4^{2 \cdot 3^{n - j}} + 4^{3^{n - j}} + 1)$, we show that $x_n^\frac{3(4^{3^n} - 1)}{4^{2 \cdot 3^{n - j}} + 4^{3^{n - j}} + 1} \ne 1$, for $j = 1,2,\ldots,n-1$. 
We have:
\begin{align*}
x_n^\frac{3(4^{3^n} - 1)}{4^{2 \cdot 3^{n - j}} + 4^{3^{n - j}} + 1} &= x_n^{\frac{4^{3^n} - 1}{4^{3^{n - j}} - 1} \cdot \frac{3(4^{3^{n - j}} - 1)}{4^{2 \cdot 3^{n - j}} + 4^{3^{n - j}} + 1}}= x_n^{\frac{3(4^{3^n} - 1)(4^{3^{n - j}} - 1)}{4^{3^{n - j + 1}} - 1}} = \\
&=x_n^{3(4^{3^{n - j}} - 1)\prod_{i = 1}^{j - 1} (4^{2 \cdot 3^{n - i}} + 4^{3^{n - i}} + 1)}=
\mathrm{N}_{n,j - 1}(x_n)^{3(4^{3^{n - j} - 1})}.
\end{align*}
By Lemma \ref{lem:factor_even} 
we have that $\mathrm{N}_{n,j}(x_n)=x_{n-j}^{2^{ej}}$, for $j = 1,2,\ldots,n-1$. But $\left(x_{n - j + 1}^{2^{e(j-1)}}\right)^3$ does not belong to $\mathrm{GF}(2,2 \cdot 3^{n - j})$ for all $j \geq 1$. 
It follows that $\mathrm{N}_{n,j - 1}(x_n)^{3(4^{3^{n - j} - 1})}$ cannot be equal to $1$. This ensures, by Lemma \ref{lem:estimate} with $a=4$, $b=n-j$ and $\ell=3$, the existence of a lower bound on the order of $x_n$, namely $p_j>3^{n - j + 1}$, for every $j = 1,2,\ldots,n-1$. Hence, we get a lower bound for the order of $x_n$, which is $3^{\frac{n(n + 1)}{2}-1}=\prod_{j = 1}^{n - 1} 3^{n - j + 1}<\prod_{j = 1}^{n - 1}p_j$.
	
The remaining term $3^{n}$ follows by the computation of the power of 3 dividing the order of $x_n$. By the repetition of the difference of cubes formula, we have:
$$\mathrm{ord}_3 \left(\frac{4^{3^n} - 1}{3}\right) = \sum_{j = 0}^{n - 1} \mathrm{ord}_3(4^{2 \cdot 3^j} + 4^{3^j} + 1) + \mathrm{ord}_3(4 - 1) - 1 = n,$$
for all $n \geq 1$. This term divides the order of $x_n$, since $x_n^{\frac{4^{3^n} - 1}{3}} \neq 1$, by Proposition~\ref{general-even}.
\end{proof}

\section{Examples of good towers in odd characteristic}\label{sec:list-odd}

In this section we find high order elements in $\mathrm{GF}(q,2^n)$, for odd $q$, using five good towers. In this section, we denote by $\varepsilon$ the element $4^{-1}$ inside $\mathrm{GF}(q,1)$. We consider the polynomials $f_i(x_{n - 1}, x_n) := x_n^2 + x_n - v_i(x_{n - 1})$, for $i \in\{1, 2,\ldots, 5\}$,
where $v_i(x)$ is a polynomial chosen as follows:
\begin{enumerate}
    \item $v_1(x) := \varepsilon x;$
    \item $v_2(x) := 4x{(x + 3\varepsilon)}^2;$
    \item $v_3(x) := 2\varepsilon x;$
    \item $v_4(x) := 8x{(2x + 3\varepsilon)}^2;$
    \item $v_5(x) := 8x{(x + 3\varepsilon)}^2.$
\end{enumerate}
\begin{rem}\label{rem:conditions}
Condition \textbf{(1)} holds for all the previous polynomials and the relation between two consecutive discriminants is given respectively by:
\begin{align*}
g_1(\delta_{n - 1}, \delta_n) &= \delta_n^2 - \delta_n- \varepsilon \delta_{n - 1} + \varepsilon; \\
g_2(\delta_{n - 1}, \delta_n) &= \delta_n^2 - \delta_n -4 \delta_{n - 1}^3 + 6\delta_{n - 1}^2 - 9\varepsilon\delta_{n - 1} + \varepsilon; \\
g_3(\delta_{n - 1}, \delta_n) &= \delta_n^2 - \delta_{n - 1};
\\g_4(\delta_{n - 1}, \delta_n) &= \delta_n^2 + 48\delta_{n - 1}\delta_n - 256\delta_{n - 1}^3 + 288\delta_{n-1}^2 - 81\delta_{n - 1}; \\
g_5(\delta_{n-1}, \delta_n) &= \delta_n^2 - 16\delta_{n - 1}^3 + 24\delta_{n - 1}^2 - 9\delta_{n - 1}.
\end{align*}
The first two towers satisfy Condition \textbf{(2)}. 
In fact
\begin{align*}
g_1(\delta_{n-1},0)&=-\varepsilon (1+4x_{n-1})+\varepsilon=-x_{n-1};\\
g_2(\delta_{n-1},0)&=x_{n-1}(x_{n-1}+3\varepsilon)^2(x_{n-1}^3+6x_{n-1}^2+9\varepsilon^2 x_{n-1}+3\varepsilon^3)^2.
\end{align*}
Similarly the last three towers satisfy Condition \textbf{(2')}. In fact,
\begin{align*}
g_3(\delta_n, 0) &= -\delta_{n};\\
g_4(\delta_n, 0) &= -256\delta_n(\delta_n - 9 \varepsilon^2)^2; \\
g_5(\delta_n, 0) &= -16\delta_n(\delta_n - 3\varepsilon)^2.
\end{align*}
Hence, Proposition \ref{general-odd} applies to $f_i(x_{n - 1}, x_n)$, for $i \in\{ 1, 2, \ldots, 5\}$ once we have some starting points. 
\end{rem}
The next two lemmas ensures the existence of a non-square $x_1$ such that $\delta_1$ is a non-square in $\mathrm{GF}(q,2)$ as well. This would be the corresponding analogue of \cite[Lemma 3]{burkhart2009finite}, but 
here we also need that \textit{both} $x_1$ and $\delta_1$ must be non-squares. This requires more effort, especially for the last tower $f_5(x_{n-1},x_n)$ below, but, 
as a balance, this gives 
a lower bound for the order of $x_n$, also. 

The present proof relies mainly on elementary combinatorial arguments. 
\begin{lem}\label{lem:nonsqnear}
Let $c\in \mathrm{GF}(q,1)$ be a non-zero element. There is at least a non-square $x_1 \in \mathrm{GF}(q,2)$ such that $x_1 + c$ is a non-square as well.
\end{lem}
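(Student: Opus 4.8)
The plan is to count non-squares in $\mathrm{GF}(q,2)$ and non-squares among the shifted elements, and show the two sets must overlap. Write $F=\mathrm{GF}(q,2)$, so $|F^*|=q^2-1$, of which exactly $(q^2-1)/2$ elements are non-squares. For a non-zero $c$, consider the map $x\mapsto x+c$; I want to count how many $x\in F$ are non-squares (or zero) \emph{and} have $x+c$ a square (or zero), and show this count is strictly less than $(q^2-1)/2$, which forces at least one non-square $x_1$ with $x_1+c$ also a non-square. Equivalently, using the quadratic character $\chi$ of $F$ (extended by $\chi(0)=0$), I would estimate the character sum $\sum_{x\in F}\bigl(1-\chi(x)\bigr)\bigl(1+\chi(x+c)\bigr)$, or the cleaner sum $\sum_{x\in F}\chi(x)\chi(x+c)$.

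First I would recall the standard evaluation of the Jacobi-type sum $S:=\sum_{x\in F}\chi(x)\chi(x+c)$. Completing: for $x\ne 0$, $\chi(x)\chi(x+c)=\chi(x^2)\chi(1+cx^{-1})=\chi(1+cx^{-1})$, and as $x$ ranges over $F^*$ the quantity $1+cx^{-1}$ ranges over $F\setminus\{1\}$, so $S=\sum_{y\in F,\,y\ne 1}\chi(y)=\Bigl(\sum_{y\in F}\chi(y)\Bigr)-\chi(1)=0-1=-1$. Next I would expand the count $N$ of $x$ with $x$ a non-square-or-zero and $x+c$ a square-or-zero in terms of $S$ together with the boundary contributions at $x=0$ and $x=-c$ (the two points where $\chi$ vanishes), getting something of the shape $N=\tfrac14\bigl(q^2-1-S\bigr)+O(1)$, hence $N\le \tfrac14(q^2-1)+O(1)$, which for $q^2$ large enough is below $(q^2-1)/2$. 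Since $q\ge 3$ already gives $q^2\ge 9$, the inequality should hold for every odd prime power $q$, and I would just check that the small constant from the boundary terms does not spoil it.

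Alternatively — and this is likely the route the authors take, given the remark that the proof is ``mainly elementary combinatorial arguments'' — one can avoid characters entirely: among the $(q^2-1)/2$ squares in $F^*$, at most $(q^2-1)/2$ of their shifts $s-c$ can land in the squares-or-zero set, but a short double-counting of the equation $s_1 - s_2 = c$ with $s_1,s_2$ squares shows the number of squares $s$ with $s-c$ also a square is about $(q^2-5)/4$, so more than half of all non-squares arise as $x_1$ with $x_1+c$ again a non-square. The main obstacle I anticipate is purely bookkeeping: correctly accounting for the elements $0$, $-c$, and (when relevant) whether $c$ itself is a square, so that the $O(1)$ corrections are pinned down exactly and the final inequality is verified for \emph{all} odd $q$ rather than just asymptotically. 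No deep input is needed beyond the value of the quadratic Gauss/Jacobi sum $\sum_{y}\chi(y)\chi(y+c)=-1$.
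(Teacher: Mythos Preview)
Your approach is correct but genuinely different from the paper's. You rely on the standard Jacobi-type evaluation $\sum_{x\in F}\chi(x)\chi(x+c)=-1$ and then carry out the $(1\pm\chi)$ expansion; once you note that $c\in\mathrm{GF}(q,1)^*$ forces $\chi(c)=\chi(-c)=1$ in $\mathrm{GF}(q,2)$ (every element of the prime field is a square there, and $-1$ is a square since $q^2\equiv 1\bmod 4$), the boundary corrections at $x=0$ and $x=-c$ vanish and the count of non-square pairs comes out to exactly $(q^2-1)/4$, so no asymptotic slack is needed. The paper instead argues by pigeonhole on the additive cosets of $\mathrm{GF}(q,1)$ inside $\mathrm{GF}(q,2)$: since the entire coset $\mathrm{GF}(q,1)$ consists of squares, the remaining $q-1$ cosets share only $(q-1)^2/2$ squares, so some coset $S$ carries at least $(q+1)/2$ non-squares; as $\rho_c$ cycles through $S$ (here $q$ prime is used), two of those non-squares must be $\rho_c$-adjacent. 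Your route yields an exact count and in fact works for any nonzero $c\in\mathrm{GF}(q,2)$, whereas the paper's orbit argument avoids characters entirely but genuinely leans on $c\in\mathrm{GF}(q,1)$ and on the prime field sitting inside the squares of $\mathrm{GF}(q,2)$.
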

\begin{proof}
Consider the action $\rho$ of $\mathrm{GF}(q,1)$ on $\mathrm{GF}(q,2)$ as an additive group, namely $\rho_g(x) = x + g$, for $g \in \mathrm{GF}(q,1)$ and $x \in \mathrm{GF}(q,2)$. Then, $\mathrm{GF}(q,2)$ is partitioned into $q$ orbits. There are exactly $\textstyle\frac{1}{2}(q^2 + 1)$ squares in $\mathrm{GF}(q,2)$. Among these, there are all the elements of the orbit $\mathrm{GF}(q,1)$. It follows that there are exactly $\textstyle\frac{1}{2}(q^2 - 2q + 1)$ square elements in $q - 1$ orbits. Hence, there is at least one orbit with at most $\textstyle\frac{1}{2}(q - 1)$ square elements and at least $\textstyle\frac{1}{2}(q + 1)$ non-square elements. We denote this orbit by $S$. It follows that there are at least two consecutive non-squares in $S$ under the repeated action of $\rho_c$, namely $a$ and $\rho_c(a) = a + c$. The lemma follows by choosing $x_1 = a$.
\end{proof}
\begin{exa}\label{exa1}
Consider, $q = 3$ and $c=1$. Denote by $z$ a generator of $\mathrm{GF}(3,2)^*$ satisfying $z^2 = z + 1$. There are exactly $5$ squares in $\mathrm{GF}(3,2)^*$, but $3$ of them are in the same orbit $\mathrm{GF}(3,1)$. The remaining ones are $z^2 = z + 1$ and $z^6 = 2z + 2$. One can check that they belong to the orbits $S_1 = (z; \quad z + 1 = z^2; \quad z + 2 = z^7)$ and $S_2 = (2z = z^5; \quad 2z + 1 = z^3; \quad 2z + 2 = z^6)$. As $x_1$ we can choose the element $2z$ or $z + 2$. They are both roots of the polynomial $x^2 = 2x + 1$, so we use this polynomial for $q = 3$ in Table \ref{tab:res1} in Section~\ref{sec:numres}.
\end{exa}

In order to show the existence of a suitable initial element $x_1$ for the tower defined by $f_5(x_{n-1},x_n)$ we prove the following lemma. 
\begin{lem}\label{lem:nonsqnear5}Let $q$ be an odd prime and let $p(x)$ be a cubic polynomial in $\mathrm{GF}(q,1)[x]$ without multiple roots, such that $p(0)\not = 0$. Then:
\begin{enumerate}[i)]\item\label{nonsqnear5_C1} The curve $C_1: y^2 = p(x)$ has at most $q^2 + 2q$ affine $\mathrm{GF}(q,2)$-rational points and the curve $C_2: y^2 = p(x^2)$ has at least $q^2 - 4q - 1$ affine $\mathrm{GF}(q,2)$-rational points.\item\label{nonsqnear5_exists} If $q\geq 11$, then there is at least a non-square $x_1\in\mathrm{GF}(q,2)$ such that $p(x_1)$ is a non-square in $\mathrm{GF}(q,2)$ as well.
\end{enumerate}
\end{lem}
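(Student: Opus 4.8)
The plan is to prove part \ref{nonsqnear5_C1} by relating the point counts on $C_1$ and $C_2$ to character sums, and then deduce part \ref{nonsqnear5_exists} by a counting argument analogous to the one in Lemma \ref{lem:nonsqnear}, using the point count on $C_2$ to force the existence of a non-square $x_1$ with $p(x_1)$ a non-square.

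For part \ref{nonsqnear5_C1}, let $\eta$ denote the quadratic character of $\mathrm{GF}(q,2)^*$, extended by $\eta(0)=0$. The number of affine points of $C_1$ is $\sum_{x\in\mathrm{GF}(q,2)}\bigl(1+\eta(p(x))\bigr) = q^2 + \sum_{x}\eta(p(x))$. Since $p$ is a cubic with no repeated roots, $y^2=p(x)$ is an elliptic curve, and the Hasse bound on its number of projective points over $\mathrm{GF}(q,2)$ gives $\bigl|\sum_{x}\eta(p(x))\bigr|\le 2q + 1$ (accounting for the point at infinity, one actually gets $\bigl|\#C_1(\mathrm{GF}(q,2)) - (q^2+1)\bigr|\le 2q$, hence at most $q^2+2q$ affine points). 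For $C_2$, write the affine count as $q^2 + \sum_{x}\eta(p(x^2))$. As $x$ ranges over $\mathrm{GF}(q,2)$, the value $u=x^2$ hits $0$ once and each nonzero square twice, so $\sum_{x}\eta(p(x^2)) = \eta(p(0)) + \sum_{u\ne 0}\bigl(1+\eta(u)\bigr)\eta(p(u)) = \sum_{u}\eta(p(u)) + \sum_{u}\eta(u)\eta(p(u)) - \eta(p(0))$. The first sum is bounded by $2q$ as above; the second is $\sum_{u}\eta\bigl(u\,p(u)\bigr)$, and $u\,p(u)$ is a quartic — it is not a perfect square as a polynomial since $p$ has no repeated root and $p(0)\ne0$, so Weil's bound gives $\bigl|\sum_u\eta(up(u))\bigr|\le 3q$ (degree $4$ minus one). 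Combining, $\sum_{x}\eta(p(x^2)) \ge -2q - 3q - 1 = -5q-1$; I may need to be slightly more careful with the exact Weil/Hasse constants to land on $q^2-4q-1$, but the mechanism is exactly this. The main technical care here is getting the constants in the Weil bound right for the possibly-reducible quartic $u\,p(u)$ and checking it is genuinely not a square.

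For part \ref{nonsqnear5_exists}, I argue as in Lemma \ref{lem:nonsqnear}: a point $(x_1,y)$ on $C_2$ with $x_1,y\ne 0$ and $x_1$ not in $\mathrm{GF}(q,1)$ records that $p(x_1^2)$ is a nonzero square; but the condition we actually want is that $p(x_1)$ is a non-square with $x_1$ a non-square. The trick is that $x_1^2$ ranges over the \emph{squares} of $\mathrm{GF}(q,2)$, so $C_2$ having many points means there are many squares $s$ with $p(s)$ a nonzero square. By comparing with part \ref{nonsqnear5_C1}: among all $x\in\mathrm{GF}(q,2)$, the number with $p(x)$ a square is $\tfrac12\#C_1 + O(1)\le \tfrac12(q^2+2q)+O(1)$; restricting to the $\tfrac12(q^2-1)$ nonzero squares $x$, the point count on $C_2$ says at least $\tfrac12(q^2-4q-1)+O(1)$ of them have $p(x)$ a square. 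Subtracting, the number of non-square $x$ with $p(x)$ a square is at most roughly $\tfrac12(q^2+2q) - \tfrac12(q^2-4q) + O(1) = 3q + O(1)$; hence the number of non-square $x$ with $p(x)$ a non-square is at least $\tfrac12(q^2-1) - (3q+O(1))$, which is positive once $q\ge 11$. I would do the bookkeeping of the $O(1)$ terms (the finitely many $x$ with $p(x)=0$, $x=0$, $x\in\mathrm{GF}(q,1)$) explicitly to pin down the threshold $q\ge 11$.

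The main obstacle I anticipate is purely in the constants: ensuring the Weil bound is applied to a genuinely non-square quartic $u\,p(u)$ (which needs $p(0)\ne0$ and $p$ squarefree, both hypotheses) and tracking all the small correction terms precisely enough that the final inequality holds exactly for $q\ge 11$ rather than only for large $q$. The structural argument — point counting on $C_1$, $C_2$ via quadratic character sums plus Hasse/Weil, then a difference count to isolate the desired non-square pair — is routine; the bound $q\ge 11$ is tight enough that the arithmetic must be done carefully.
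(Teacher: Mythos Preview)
Your strategy is correct and matches the paper's: bound $C_1$ and $C_2$ via Hasse--Weil, then for part~\ref{nonsqnear5_exists} use the $2$-to-$1$ correspondence $(t,y)\mapsto(t^2,y)$ from $C_2$ to $C_1$ to lower-bound the number of \emph{square} $x$ with $p(x)$ a square, and compare with the total coming from $C_1$. The paper phrases part~\ref{nonsqnear5_exists} as a contradiction (assume every non-square $\alpha$ has $p(\alpha)$ a square, then overcount the affine points of $C_1$) rather than as a direct count, but the resulting inequality $q^2-8q-9\le 0$, forcing $q\le 9$, is the same as yours.

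For part~\ref{nonsqnear5_C1} the paper takes a shorter path than your character-sum decomposition: it simply observes that $p(x^2)$ is squarefree of degree $6$ (this is exactly where both hypotheses, $p$ squarefree and $p(0)\ne 0$, enter), so $C_2$ has genus at most $2$, and Weil gives at least $q^2+1-4q$ projective points, hence at least $q^2-4q-1$ affine after removing at most two points at infinity. Your decomposition works too once two small points are fixed: the trailing $-\eta(p(0))$ in your identity is a slip (it cancels), and since $u\,p(u)$ is squarefree of degree $4$ the curve $y^2=u\,p(u)$ has genus $1$, so the sharp bound is $\bigl|\sum_u\eta(u\,p(u))\bigr|\le 2q+1$ rather than the generic $(d-1)q=3q$ you quoted; with that correction you land exactly on $q^2-4q-1$. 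The paper's direct application of Weil to $C_2$ simply sidesteps this bookkeeping.
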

\begin{proof} \ref{nonsqnear5_C1}) We observe that $p(x^2)$ is square-free since $p(x)$ is square-free and $p(0)\ne 0$ by hypothesis. The first statement follows by Weil bound $|N-(q^2 + 1)| \leq 2gq$, for every smooth projective curve of genus $g$ with $N$ points over $\mathrm{GF}(q,2)$, since $C_1$ is an elliptic curve and $C_2$ has genus at most 2, see \cite[Propositions 6.1.3 (a) and 6.2.3 (b)]{stichtenoth2009functionfields}. It is well known that the number of points at infinity is $1$ in an elliptic curve and it is at most $2$ in a genus $2$ curve. Hence, \ref{nonsqnear5_C1}) is proved.

\ref{nonsqnear5_exists}) By contradiction we assume that $p(\alpha)$ is a square for all non-square $\alpha \in \mathrm{GF}(q,2)$. Let $\beta\in \mathrm{GF}(q,2)$ be a square root of $p(\alpha)$. Since there are exactly $\textstyle\frac{1}{2}(q^2 - 1)$ non-squares in $\mathrm{GF}(q,2)$ and $\beta \neq 0$, except at most for $3$ choices of $\alpha$, then the pairs $(\alpha,\beta)$ and $(\alpha,-\beta)$ produce at least $q^2 - 4$ distinct points of $C_1$. We show that such points are too many. We estimate the number of squares $\alpha$ such that $p(\alpha)$ is also a square in $\mathrm{GF}(q,2)$. Each point $(t,y)$ in $C_2$ corresponds to the point $(x,y)$ in $C_1$ with $x = t^2$. This correspondence is not $1-1$ because, when $t\ne 0$, the point $(-t,y)$ determines the same point in $C_1$. Let $N$ be the number of affine $\mathrm{GF}(q,2)$-rational points of $C_2$, then $C_1$ must have more than $\textstyle\frac{N}{2}$ affine $\mathrm{GF}(q,2)$-rational points $(x,y)$ with $x$ being a square in $\mathrm{GF}(q,2)$. By Part \ref{nonsqnear5_C1}), we have $N\ge q^2 - 4q -1$. Counting the points of $C_1$ we get, again by Part \ref{nonsqnear5_C1}), $\textstyle q^2 - 4 +\frac{1}{2}(q^2 - 4q -1) \le q^2 + 2q$ which yields, after a straightforward computation, $q^2 - 8q - 9 \le 0$. It follows that $q \le 9$, which is contrary to our assumption on $q$. Hence, there is at least one non-square $x_1 \in \mathrm{GF}(q,2)$ such that $p(x_1)$ is a non-square too.
\end{proof}
\begin{rem}\label{rem5}For suitable polynomials $p(x)$, Part \ref{nonsqnear5_exists}) of the previous lemma also holds for odd primes $q < 11$. In the sequel, we are interested in \begin{align*}p(x)\! &= \!1 + 4v_5(x) \!=\! 1 + 32x(x + 3\varepsilon)^2\!=\! 32\!\left(x + \frac{1}{2}\right)\!\!\left(x^2 + x + \frac{1}{16}\right)\!= \\ &= 32\left(x +\frac{1}{2}\right)\left(x + \frac{1}{2} - a\right)\left(x + \frac{1}{2} + a\right),
\end{align*} 
where $a$ is a square root of $\textstyle\frac{3}{16}$ in $\mathrm{GF}(q,2)$. We are interested in  this polynomial because $\delta_1=p(x_1)$, for $f_5$, so we need that both $x_1$ and $\delta_1$ are non-squares. It follows that $p(x)$ is square-free for $q = 5$ and $q = 7$. For $q = 5$, if we choose $x_1$ being a root of the polynomial $x^2 + 4x + 2$, then $p(x_1) = x_1^5$. Hence, both $x_1$ and $p(x_1)$ are non-square in $\mathrm{GF}(5,2)$. 
Similarly, for $q = 7$, if we choose $x_1$ as a root of $x^2 + 5x + 5$, then both $x_1$ and $p(x_1)$ are non-squares in $\mathrm{GF}(7,2)$. 
Finally, for $q = 3$, we have that $p(x)$ has multiple roots, but Part \ref{nonsqnear5_exists}) of Lemma \ref{lem:nonsqnear5} still holds. In fact, if we choose $x_1$ as a root of the polynomial $x^2 + 2x + 2$, as in Example \ref{exa1}, then $p(x_1) = x_1$, hence $p(x_1)$ is a non-square as well. Kindly refer to Remark \ref{rem_f4=f5} for further explanations. We use the aforementioned examples in Table \ref{tab:res5} in Section~\ref{sec:numres}.
\end{rem}
The following corollary ensures the existence of towers defined by $f_i(x_{n - 1},x_n)$ generating high order elements for $i \in\{ 1,2,\ldots,5\}$.
\begin{cor}\label{bound1-4}
The polynomials $f_i(x_{n - 1},x_n)$, for $i \in\{1,2,\ldots,5\}$, define infinite towers of fields. Moreover, for a suitable choice of $x_1$, the order of $x_n$ in $\mathrm{GF}(q,2^n)$ is greater than $2^{\frac{1}{2}(n^2 + 3n) + \mathrm{ord}_2(q - 1) - 2}$. The same bound holds for $\delta_n$ in the towers defined by $f_1(x_{n - 1},x_n)$ and $f_2(x_{n - 1},x_n)$ and, when $q > 3$, for $\delta_n$ in the tower defined by $f_4(x_{n - 1},x_n)$.
\end{cor}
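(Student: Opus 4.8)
The plan is to reduce everything to Proposition~\ref{general-odd} and Corollary~\ref{bound-odd}. By Remark~\ref{rem:conditions}, each $f_i$ satisfies Condition \textbf{(1)}, while $f_1$ and $f_2$ satisfy \textbf{(2)} and $f_3$, $f_4$, $f_5$ satisfy \textbf{(2')}; so the only missing ingredient is a starting element $x_1 \in \mathrm{GF}(q,2)$ which is a non-square and for which $\delta_1 = 1 + 4v_i(x_1)$ is a non-square in $\mathrm{GF}(q,2)$ as well. Once such an $x_1$ is produced, Proposition~\ref{general-odd} taken with $n = 2$ (together with the assumption on $x_1$ and $\delta_1$) shows that $x_j$ and $\delta_j$ are non-squares in $\mathrm{GF}(q,2^j)^*$ for every $j \ge 1$; in particular $\delta_{n-1}$ is a non-square in $K_{n-1}\subseteq\mathrm{GF}(q,2^{n-1})$, so $f_i(x_{n-1},y)$ is irreducible over $K_{n-1}$, whence $[K_n:K_{n-1}] = 2$ for all $n$ and the towers are infinite, and Corollary~\ref{bound-odd} yields the claimed lower bound $2^{\frac12(n^2+3n)+\mathrm{ord}_2(q-1)-2}$ for the order of $x_n$.

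The core of the argument is the construction of $x_1$ for each tower, which I would carry out case by case using the factorisation of $\delta_1$ as a polynomial in $x_1$. For $f_1$ one has $\delta_1 = 1 + x_1$, so Lemma~\ref{lem:nonsqnear} with $c = 1$ applies directly. For $f_3$ one has $\delta_1 = 1 + 2x_1$, and since $2$ is a square in $\mathrm{GF}(q,2)^*$ one applies Lemma~\ref{lem:nonsqnear} to $y_1 := 2x_1$ and sets $x_1 = y_1/2$. For $f_2$ and $f_4$ the key observation is the identities
\[
1 + 4v_2(x) = (4x+1)^2(x+1), \qquad 1 + 4v_4(x) = (8x+1)^2(2x+1),
\]
which reduce the non-squareness of $\delta_1$ to that of $x_1+1$, respectively $2x_1+1$, so Lemma~\ref{lem:nonsqnear} again does the job; one checks along the way that the excluded values $-1/4,-1,-1/8,-1/2$ are all squares in $\mathrm{GF}(q,2)$, hence are never equal to the non-square $x_1$ produced by the lemma, so $\delta_1 \ne 0$. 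Finally, for $f_5$ the existence of $x_1$ is exactly Part~\ref{nonsqnear5_exists}) of Lemma~\ref{lem:nonsqnear5} when $q \ge 11$, and is handled by the explicit choices recorded in Remark~\ref{rem5} for $q \in \{3,5,7\}$.

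It remains to decide for which towers the bound transfers to $\delta_n$, which by Corollary~\ref{bound-odd} amounts to checking $\delta_n^2 \notin \mathrm{GF}(q,2^{n-1})$ for all $n > 1$; here one simply reads off the relations $g_i(\delta_{n-1},\delta_n) = 0$ from Remark~\ref{rem:conditions}. For $f_1$ and $f_2$ one gets $\delta_n^2 = \delta_n + w_{n-1}$ with $w_{n-1} \in \mathrm{GF}(q,2^{n-1})$, which is not in $\mathrm{GF}(q,2^{n-1})$ because $\delta_n \notin \mathrm{GF}(q,2^{n-1})$: a non-square of $\mathrm{GF}(q,2^n)^*$ cannot lie in the subfield $\mathrm{GF}(q,2^{n-1})$, every nonzero element of which is a square in $\mathrm{GF}(q,2^n)^*$. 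For $f_4$ one gets $\delta_n^2 = -48\,\delta_{n-1}\delta_n + w_{n-1}$ with $w_{n-1} \in \mathrm{GF}(q,2^{n-1})$; when $q > 3$ the coefficient $48$ is non-zero and $\delta_{n-1}\delta_n \notin \mathrm{GF}(q,2^{n-1})$ since $\delta_{n-1}\in\mathrm{GF}(q,2^{n-1})^*$ and $\delta_n\notin\mathrm{GF}(q,2^{n-1})$, so $\delta_n^2 \notin \mathrm{GF}(q,2^{n-1})$, whereas for $q = 3$ the coefficient vanishes (indeed $f_4 = f_5$ there, cf.\ Remark~\ref{rem_f4=f5}) and the relation forces $\delta_n^2 \in \mathrm{GF}(q,2^{n-1})$, consistently with the statement. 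For $f_3$ and $f_5$ the relations read $\delta_n^2 = \delta_{n-1}$ and $\delta_n^2 = 16\delta_{n-1}^3 - 24\delta_{n-1}^2 + 9\delta_{n-1}$, both already in $\mathrm{GF}(q,2^{n-1})$, so Corollary~\ref{bound-odd} gives nothing for $\delta_n$ in those cases.

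The main obstacle is the construction of the starting point $x_1$ for the tower defined by $f_5$: unlike the other four towers, $\delta_1$ does not factor leaving a single linear factor, and one is forced to count affine $\mathrm{GF}(q,2)$-rational points on the auxiliary curves $y^2 = p(x)$ and $y^2 = p(x^2)$ via the Weil bound, which is conclusive only for $q \ge 11$ and must be supplemented by hand for $q = 3,5,7$; this is precisely the content of Lemma~\ref{lem:nonsqnear5} and Remark~\ref{rem5}. Everything else is routine bookkeeping with the discriminant relations and direct citation of Proposition~\ref{general-odd} and Corollary~\ref{bound-odd}.
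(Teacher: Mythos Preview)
Your proof is correct and follows essentially the same route as the paper: reduce to Proposition~\ref{general-odd} and Corollary~\ref{bound-odd} via Remark~\ref{rem:conditions}, produce the starting $x_1$ for $f_1$--$f_4$ from Lemma~\ref{lem:nonsqnear} (the paper uses $c=1$ for $f_1,f_2$ and $c=\tfrac12$ for $f_3,f_4$ directly, rather than your change of variables, but this is cosmetic) and for $f_5$ from Lemma~\ref{lem:nonsqnear5}/Remark~\ref{rem5}, then read off $\delta_n^2\notin\mathrm{GF}(q,2^{n-1})$ from the shape of $g_1,g_2,g_4$. Your explicit check that $\delta_1\neq 0$ (via the excluded values lying in the prime field, hence being squares) is a detail the paper leaves implicit.
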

\begin{proof}
First, for each tower considered, we show the existence of a non-square starting point $x_1$ such that the discriminant $\delta_1$ is a non-square as well. A straightforward computation shows that $\delta_1 = x_1 + 1$ for $f_1$ and that $\delta_1 = 16x_1^3 + 24x_1^2 + 9x_1 + 1 = (x_1 + 1)(4x_1 + 1)^2$ for $f_2$. Hence, for the first two polynomials, it is enough to choose $x_1$ as in Lemma \ref{lem:nonsqnear} with $c=1$. A straightforward computation also shows that $\textstyle \delta_1 = 2\left(x_1 + \frac{1}{2}\right)$ for $f_3$ and that $\textstyle \delta_1 = 128\left(x_1 + \frac{1}{2}\right)(x_1 + 2\varepsilon^2)^2$ for $f_4$. Hence, for the third and the fourth polynomial, it is enough to choose $x_1$ as in Lemma \ref{lem:nonsqnear} with $\textstyle c=\frac{1}{2}$. For the last tower, by Remark~\ref{rem5}, we can take $x_1$ as in Remark \ref{rem5} for $q \leq 7$ and we can take $x_1$ as in Lemma~\ref{lem:nonsqnear5} for $q \geq 11$.

Now, we know, by Remark \ref{rem:conditions}, that all the considered towers satisfy Conditions \textbf{(1)} and \textbf{(2)}, or Conditions \textbf{(1)} and \textbf{(2')}. Therefore, the result for $x_n$ follows by Corollary \ref{bound-odd}. For $\delta_n$ we have to check that $\delta_n^2 \notin \mathrm{GF}(q,2^{n-1})$ for $n > 1$, in the tower defined by $f_1(x_{n-1},x_n)$ and $f_2(x_{n-1},x_n)$, for $q\geq 3$, and by $f_4(x_{n-1},x_n)$ for $q > 3$. But this follows by the expression of $g_1(\delta_{n - 1},\delta_{n })$,
 $g_2(\delta_{n - 1},\delta_{n })$ and $g_4(\delta_{n - 1},\delta_{n })$ in Remark~\ref{rem:conditions}.
\end{proof}
As in \cite{burkhart2009finite}, the bound of the previous corollary does not seem to be sharp, in fact in many cases we were able to construct generators of the multiplicative group $\mathrm{GF}(q,2^n)^*$, whose order is $q^{2^n} - 1$, which is much higher than $2^{\frac{n^2}{2}}$. The interested reader can compare the tables in Section \ref{sec:numres} with the experimental results of \cite{burkhart2009finite}.
\begin{rem}\label{rem_f4=f5} 
The bound in the Corollary \ref{bound1-4} above, does not hold for $\delta_n$ in the tower defined by $f_3(x_{n - 1}, x_n)$ and $f_5(x_{n - 1}, x_n)$. 
In fact, $\delta_n^2 \in \mathrm{GF}(q,2^{n - 1})$, for all $n > 1$, which can be verified easily. The interested reader can see the numerical results of Tables \ref{tab:res3} and \ref{tab:res5} in Section \ref{sec:numres}.
A careful comparison between these two tables reveals an interesting difference when $q > 3$. In fact, the order of the discriminant $\delta_n$ turns out to grow very slowly in Table \ref{tab:res3} in comparison to Table \ref{tab:res5}. The reason is that in the former tower the discriminants satisfy the relation
$g_3(\delta_{n - 1}, \delta_n) = \delta_n^2 - \delta_{n - 1} = 0,$
which yields $\delta_n^{2^{n - 1}} = \delta_{n - 1}^{2^{n - 2}} = \ldots = \delta_{1} \in \mathrm{GF}(q,2).$  This implies that we can estimate the order of $\delta_n$, which turns out to be lower than $2^{n - 1 + \mathrm{ord}_2(q^2 - 1)}$. In the tower defined by $f_5(x_{n - 1},x_n)$, we have that $\delta_n^{2^j} \in \mathrm{GF}(q,2^{n - j})$ holds for $j = 1$, but not for all $j < n$. This explains why the order grows comparatively faster when $q > 3$. In the case $q = 3$ the polynomial equation $g_5(\delta_{n - 1},\delta_n) = \delta_n^2 - \delta_{n - 1}^3 = 0$ gives $\delta_n^{2^{n - 1}} = \delta_{1}^{3^{n - 1}} \in \mathrm{GF}(3,2)$. This explains why the numerical results for the order of $\delta_n$ are similar to the tower defined by $f_3(x_{n - 1},x_n)$.
\end{rem}
\begin{rem}
From the relation $g_4(\delta_{n - 1},\delta_n)=0$ between $\delta_n$ and $\delta_{n - 1}$ in the fourth tower, for $q = 3$, we get $g_4(\delta_{n - 1}, \delta_n) = \delta_n^2 - \delta_{n - 1}^3 = 0$. Hence, we observe that the proof of last corollary does not work when $q = 3$. We also point out that $f_4(x_{n - 1}, x_n) = f_5(x_{n - 1}, x_n)$ when $q = 3$. This fact explains why the numerical results in Tables \ref{tab:res4} and \ref{tab:res5} have the same values in the first two columns.
\end{rem}
Of course could exist other towers satisfying analogues of Conditions \textbf{(1)} and \textbf{(2)} or Conditions \textbf{(1)} and \textbf{(2')} above. 
An extensive computer search could show the non-existence of similar examples of the form $f(x_{n - 1},x_n) = x_n^2 + x_n + v(x_{n - 1})$, with $\deg(v(x)) \leq 3$, at least for small prime fields.

\section{Examples of good towers in even characteristic}\label{sec:list-even}

In this section we 
list polynomials generating high order elements, as in
Section \ref{sec:list-odd}.
We have to adapt some proofs in even characteristic, since we have to prove that our cubic polynomials $f(x_{n-1},y)$ are Galois and irreducible in $\mathrm{GF}(2,2\cdot 3^{n-1})[y]$. Let $e$ be a non-negative integer. In the following results, we prove that $f(x_{n - 1},x_n):= x_n^3+x_n+x_{n-1}^{2^e}$ actually defines an infinite normal separable tower.

\begin{lem}\label{lem:cubicroots4}
Let $e$ and $n$ be integers such that $e\geq 0$ and $n\ge 2$, and let $x_{n-1}\in \mathrm{GF}(2,2\cdot 3^{n-1})$. Assume that $u_n^3\in \mathrm{GF}(2,2\cdot 3^{n-1})$ is a root of the quadratic polynomial $y^2+x_{n-1}^{2^e}y+1$ and that $x_n:=u_n+u^{-1}_n\notin \mathrm{GF}(2,2\cdot 3^{n-1})$ is a root of the cubic polynomial $y^3+y+x_{n-1}^{2^e}$. 
Let $u_{n+1}\in \mathrm{GF}(2,2\cdot 3^{n+1})$ be a third root of $u_n^{2^e}$. Then:
\begin{enumerate}[i)] 
\item\label{item_cond1}
 $u_{n+1}\notin \mathrm{GF}(2,2\cdot 3^{n})$;

\item\label{item_cond2}
$u_{n+1}^3$ and $u_{n+1}^{-3}$ are the roots of $y^2+x_n^{2^e}y+1$;

\item\label{item_cond3}
 $x_{n+1}:=u_{n+1}+u_{n+1}^{-1}$ is a root of $y^3+y+x_n^{2^e}$ and $x_{n+1}\notin \mathrm{GF}(2,2\cdot 3^{n})$.
\end{enumerate} 
\end{lem}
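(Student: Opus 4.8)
The statements \ref{item_cond2} and \ref{item_cond3} will be essentially formal consequences of the hypotheses together with Lemma \ref{lem:cubicroots2}, so the plan is to concentrate on \ref{item_cond1}, which I would reduce to the assertion that \emph{$u_n$ is not a cube in $\mathrm{GF}(2,2\cdot 3^n)$}. First I would locate $u_n$. Since $x_n = u_n + u_n^{-1} \notin \mathrm{GF}(2,2\cdot 3^{n-1})$, we have $u_n \notin \mathrm{GF}(2,2\cdot 3^{n-1})$; and $u_n^3 \in \mathrm{GF}(2,2\cdot 3^{n-1})$, while the base field already contains the three cube roots of unity because $3 \mid 4^{3^{n-1}} - 1$. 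Hence $y^3 + u_n^3$ is the minimal polynomial of $u_n$ over $\mathrm{GF}(2,2\cdot 3^{n-1})$ — a proper factorization would force a cube root of $u_n^3$ into the base field and hence $u_n$ itself into the base field — so $\mathrm{GF}(2,2\cdot 3^{n-1})(u_n) = \mathrm{GF}(2,2\cdot 3^n)$ and the conjugates of $u_n$ over $\mathrm{GF}(2,2\cdot 3^{n-1})$ are $u_n, \zeta u_n, \zeta^2 u_n$ for a primitive cube root of unity $\zeta$ in the base. Multiplying them yields the key identity $\mathrm{N}_{n,1}(u_n) = \zeta^3 u_n^3 = u_n^3$.

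For \ref{item_cond1}: if $u_n = w^3$ with $w \in \mathrm{GF}(2,2\cdot 3^n)$, then applying $\mathrm{N}_{n,1}$ and using the identity above gives $u_n^3 = \mathrm{N}_{n,1}(w)^3$, so $u_n/\mathrm{N}_{n,1}(w)$ is a cube root of unity; since $\mathrm{N}_{n,1}(w)$ and every cube root of unity lie in $\mathrm{GF}(2,2\cdot 3^{n-1})$, this forces $u_n \in \mathrm{GF}(2,2\cdot 3^{n-1})$, a contradiction. So $u_n$, and therefore also $u_n^{2^e}$ (the multiplicative group has odd order, so raising to $2^e$ is a bijection commuting with cubing), is a non-cube in $\mathrm{GF}(2,2\cdot 3^n)$. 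On the other hand $u_n^{2^e}$ does become a cube in $\mathrm{GF}(2,2\cdot 3^{n+1})^*$ — indeed every element of $\mathrm{GF}(2,2\cdot 3^n)^*$ does, since $(4^{3^{n+1}} - 1)/3$ is a multiple of $4^{3^n} - 1$ — so $u_{n+1}$ exists, and the three cube roots of $u_n^{2^e}$ in $\mathrm{GF}(2,2\cdot 3^{n+1})$ differ from one another by cube roots of unity, which lie in $\mathrm{GF}(2,2\cdot 3^n)$. If one of them lay in $\mathrm{GF}(2,2\cdot 3^n)$, then $u_n^{2^e}$ would be a cube there; hence $u_{n+1} \notin \mathrm{GF}(2,2\cdot 3^n)$, which is \ref{item_cond1}.

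Part \ref{item_cond2} is then a direct check: $u_{n+1}^3 \cdot u_{n+1}^{-3} = 1$ and, by the Frobenius in characteristic $2$, $u_{n+1}^3 + u_{n+1}^{-3} = u_n^{2^e} + u_n^{-2^e} = (u_n + u_n^{-1})^{2^e} = x_n^{2^e}$; these two roots are distinct, since $u_{n+1}^3 = u_{n+1}^{-3}$ would give $u_n^{2^{e+1}} = 1$, hence $u_n = 1$ and $x_n = 0 \in \mathrm{GF}(2,2\cdot 3^{n-1})$. Therefore $u_{n+1}^3$ and $u_{n+1}^{-3}$ are exactly the roots of $y^2 + x_n^{2^e} y + 1$. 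For \ref{item_cond3}, I would apply Lemma \ref{lem:cubicroots2} with $n$ in place of $n-1$: by \ref{item_cond2} the element $u_{n+1}^3 = u_n^{2^e} \in \mathrm{GF}(2,2\cdot 3^n)$ is a root of $y^2 + x_n^{2^e} y + 1$ with $x_n^{2^e} \in \mathrm{GF}(2,2\cdot 3^n)$, so $x_{n+1} = u_{n+1} + u_{n+1}^{-1}$ is a root of $y^3 + y + x_n^{2^e}$, and it lies in $\mathrm{GF}(2,2\cdot 3^n)$ if and only if $u_{n+1}$ does — which, by \ref{item_cond1}, it does not. (Alternatively one expands $x_{n+1}^3 = u_{n+1}^3 + u_{n+1}^{-3} + u_{n+1} + u_{n+1}^{-1} = x_n^{2^e} + x_{n+1}$ in characteristic $2$.)

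The main obstacle is the norm identity $\mathrm{N}_{n,1}(u_n) = u_n^3$: once it is available, \ref{item_cond1} drops out at once and the rest is bookkeeping. Its proof depends on $3 \mid 4^{3^{n-1}} - 1$, which both places the cube roots of unity in the base field and gives the conjugates of $u_n$ the clean shape $u_n, \zeta u_n, \zeta^2 u_n$; this divisibility is exactly why the natural ambient tower consists of the fields $\mathrm{GF}(2,2\cdot 3^n)$ rather than $\mathrm{GF}(2,3^n)$.
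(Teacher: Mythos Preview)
Your proof is correct. Parts~\ref{item_cond2} and~\ref{item_cond3} coincide with the paper's treatment (direct verification, then Lemma~\ref{lem:cubicroots2}). For Part~\ref{item_cond1} you take a genuinely different route. The paper observes that $u_{n+1}^9=(u_n^3)^{2^e}$ is a non-cube in $\mathrm{GF}(2,2\cdot 3^{n-1})$ and then uses the divisibility fact $9\nmid\bigl(4^{2\cdot 3^{n-1}}+4^{3^{n-1}}+1\bigr)$ to rule out any ninth root of it in $\mathrm{GF}(2,2\cdot 3^{n})$. You instead identify the Galois conjugates of $u_n$ over $\mathrm{GF}(2,2\cdot 3^{n-1})$ as $u_n,\zeta u_n,\zeta^2 u_n$, extract the norm identity $\mathrm{N}_{n,1}(u_n)=u_n^3$, and deduce directly that $u_n$ (hence $u_n^{2^e}$) is not a cube in $\mathrm{GF}(2,2\cdot 3^{n})$. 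The paper's argument is a one-line appeal to the $3$-part of the cyclic groups in the tower and is slightly shorter; your Galois/norm argument is a bit longer but more structural, and as a by-product it makes explicit that $\mathrm{GF}(2,2\cdot 3^{n-1})(u_n)=\mathrm{GF}(2,2\cdot 3^{n})$ and records the identity $\mathrm{N}_{n,1}(u_n)=u_n^3$, both of which the paper leaves implicit.
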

\begin{proof}
Part \ref{item_cond1}) follows since $u_{n+1}^9=(u_{n+1}^3)^3=(u_n^{2^e})^3=(u_n^3)^{2^e}$ belongs to $\mathrm{GF}(2,2\cdot 3^{n-1})$ and since $\mathrm{GF}(2,2\cdot 3^{n})$ does not contain any 9-th root of non-cubic elements in $\mathrm{GF}(2,2\cdot 3^{n-1})$ because 9 does not divide
$$\frac{|\mathrm{GF}(2,2\cdot 3^{n})^*|}{|\mathrm{GF}(2,2\cdot 3^{n-1})^*|}=1+4^{3^{n-1}}+4^{2\cdot 3^{n-1}},$$ for all $n\ge 1$.

Part \ref{item_cond2}) follows by straightforward verification.

The last part follows by Lemma \ref{lem:cubicroots2} and by Parts \ref{item_cond1}) and \ref{item_cond2}).
\end{proof}

Part iii) in the previous Lemma shows 
by induction that if $f(x_1,y)=y^3+y+x_{1}^{2^e}$
is irreducible over  
$\mathrm{GF}(2,6)$, then 
$f(x_n,y)=y^3+y+x_{n}^{2^e}$ is also irreducible over  
$\mathrm{GF}(2,2\cdot 3^{n})$ for all $n>1$.
It follows that the Galois groups of the splitting field of $f(x_n,y)$ is either the cyclic group
$\mathbb{Z}/3 \mathbb{Z}$ or the symmetric group $\mathrm{S}_3$, at each iteration. 
In the same spirit of the results above, we show that if the first polynomial $f(x_1,y)$ is normal (i.e., the Galois group of the splitting field is
$\mathbb{Z}/3 \mathbb{Z}$) then 
$f(x_n,y)$ is normal at each iteration.

\begin{lem}\label{lem:galois}
Let $e\geq 0$ and $x_{n - 1} \in \mathrm{GF}(2,2 \cdot 3^{n - 1})$. Assume that $f(x_{n - 1},y)=y^3+y+x_{n-1}^{2^e}$ splits into linear factors in $\mathrm{GF}(2,2 \cdot 3^{n})[y]$. Then $f(x_{n},y)$ splits into linear factors in $\mathrm{GF}(2,2 \cdot 3^{n + 1})[y]$.
\end{lem}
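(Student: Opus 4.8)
The plan is to imitate, one level higher, the construction of Lemma~\ref{lem:cubicroots4}, but fed by the three roots of $f(x_{n-1},y)$ in $\mathrm{GF}(2,2\cdot 3^{n})$ that the hypothesis supplies, and to keep the whole computation inside $\mathrm{GF}(2,2\cdot 3^{n+1})$. Write $L:=\mathrm{GF}(2,2\cdot 3^{n})$ and $M:=\mathrm{GF}(2,2\cdot 3^{n+1})$. Since $2\cdot 3^{n}$ is even we have $\mathrm{GF}(2,2)\subseteq L\subseteq M$, so we may fix a primitive cube root of unity $\omega\in L$. We may also assume $x_{n-1}\neq 0$ (otherwise $f(x_{n-1},y)=y(y+1)^2$ and the claim is immediate, using $3\mid 2\cdot 3^{n+1}$), so that $x_{n}\neq 0$ because $x_n^3+x_n=x_{n-1}^{2^e}\neq 0$.

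First I would recover a Cardano parameter. By hypothesis $f(x_{n-1},y)=y^3+y+x_{n-1}^{2^e}$ factors in $L[y]$ as $(y-x_{n})(y-r_1)(y-r_2)$ with $r_1,r_2\in L$, and Vieta's formulas give $x_{n}+r_1+r_2=0$ and $x_{n}r_1+x_{n}r_2+r_1r_2=1$. Set $u_n:=x_{n}+\omega^2 r_1+\omega r_2\in L$ and $u_n':=x_{n}+\omega r_1+\omega^2 r_2\in L$. A short characteristic-$2$ computation, using $1+\omega+\omega^2=0$ together with the two Vieta relations, gives $u_n+u_n'=x_{n}$ and $u_nu_n'=(x_n+r_1+r_2)^2+1=1$; hence $u_n\in L^{*}$ and $u_n+u_n^{-1}=x_{n}$. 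In particular $u_n^{3}+u_n^{-3}=(u_n+u_n^{-1})^3+(u_n+u_n^{-1})=x_n^3+x_n=x_{n-1}^{2^e}$, so $u_n^3$ is a root of $w^2+x_{n-1}^{2^e}w+1$ over $\mathrm{GF}(2,2\cdot 3^{n-1})$ — this is exactly the hypothesis of Lemma~\ref{lem:cubicroots4}, although the argument below will only use that $u_n\in L$.

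Next I would push everything up to $M$. Choose $u_{n+1}$ with $u_{n+1}^3=u_n^{2^e}$: since $u_{n+1}$ is a root of $Y^3+u_n^{2^e}\in L[Y]$, the degree $[L(u_{n+1}):L]$ divides $3$, so $L(u_{n+1})$ is $L$ or the unique cubic extension $M$ of $L$, and in either case $u_{n+1}\in M$, whence also $\omega^k u_{n+1}\in M$ for $k=0,1,2$. By Cardano's formula in characteristic $2$ (Lemma~\ref{lem:cubicroots2}), each $\xi_k:=\omega^k u_{n+1}+\omega^{-k}u_{n+1}^{-1}\in M$ satisfies $\xi_k^3+\xi_k=u_n^{2^e}+u_n^{-2^e}=(u_n+u_n^{-1})^{2^e}=x_n^{2^e}$, so $\xi_0,\xi_1,\xi_2$ are roots of $f(x_{n},y)=y^3+y+x_n^{2^e}$ lying in $M$. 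Finally $f(x_n,y)$ is separable (its derivative is $(y+1)^2$ and $y=1$ is not a root, since $x_n^{2^e}\neq 0$), hence has exactly three roots; one checks the $\xi_k$ are pairwise distinct — equivalently, that $v\mapsto v+v^{-1}$ is injective on the three cube roots of $u_n^{2^e}$, which can fail only if $x_{n-1}=0$ — so they exhaust the roots and $f(x_{n},y)$ splits into linear factors in $M[y]=\mathrm{GF}(2,2\cdot 3^{n+1})[y]$.

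The step I expect to be delicate is the first one: verifying that the Lagrange resolvent $u_n$ genuinely lands in $L$ rather than in a quadratic extension of it. This is precisely where one uses that $f(x_{n-1},y)$ \emph{splits} over $L$, not merely that it is irreducible: the formula $u_n=x_n+\omega^2 r_1+\omega r_2$ needs all three roots available in $L$, and the identity $u_nu_n'=1$ makes it a unit. If one instead only knew irreducibility of $f(x_{n-1},y)$, the substitute would be to invoke Lemma~\ref{lem:quadratic_resolvent} to conclude $u_n^3\in \mathrm{GF}(2,2\cdot 3^{n-1})$ and then apply Lemma~\ref{lem:cubicroots4} directly, since an irreducible cubic over a finite field automatically splits over its cubic extension. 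Everything after Step~1 is bookkeeping with characteristic-$2$ identities and the observation that $L$ has no subextension of degree $2$ over $\mathrm{GF}(2,2\cdot 3^{n-1})$.
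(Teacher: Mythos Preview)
Your proof is correct, but it takes a genuinely different route from the paper's.

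The paper proves the lemma via the \emph{quadratic resolvent} criterion (Lemma~\ref{lem:quadratic_resolvent}): writing the roots of $f(x_{n-1},y)$ as $r_1=x_n,r_2,r_3\in L$, it checks by a direct computation that the resolvent $R(x_n,y)=y^2+x_n^{2^e}y+x_n^{2^{e+1}}+1$ of $f(x_n,y)$ factors over $L$ as $(y+r_2^{2^e})(y+r_3^{2^e})$, and then concludes by Lemma~\ref{lem:quadratic_resolvent} that the Galois group of $f(x_n,y)$ sits inside $A_3$, so the splitting field is contained in the cubic extension $M$. Your argument instead bypasses Lemma~\ref{lem:quadratic_resolvent} entirely: you build the Lagrange resolvent $u_n=x_n+\omega^2 r_1+\omega r_2\in L$, verify $u_n u_n^{-1}=1$ and $u_n+u_n^{-1}=x_n$, then take a cube root $u_{n+1}\in M$ of $u_n^{2^e}$ and exhibit the three roots of $f(x_n,y)$ explicitly as $\omega^k u_{n+1}+\omega^{-k}u_{n+1}^{-1}$.

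What each approach buys: the paper's proof is shorter and yields the pleasant explicit factorisation of the resolvent in terms of the previous roots. Your proof is more constructive --- it actually produces the roots of $f(x_n,y)$ rather than just certifying that the splitting field has the right degree --- and it dovetails more visibly with the Cardano framework of Lemmas~\ref{lem:cubicroots1}--\ref{lem:cubicroots2} and~\ref{lem:cubicroots4}, making it self-contained without Lemma~\ref{lem:quadratic_resolvent}. Your handling of the degenerate case $x_{n-1}=0$ and the distinctness of the $\xi_k$ (reducing to $u_n=1$, hence $x_n=0$) is also a bit more careful than what the paper makes explicit.
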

\begin{proof}
 Let $r_1, r_2, r_3 \in \mathrm{GF}(2,2 \cdot 3^{n})$ be the roots of $f(x_{n - 1},y)$ and choose $x_n = r_1$. Let $R(x_n,y) = y^2 + x_n^{2^e} y + x_n^{2^{e+1}} + 1$ be the quadratic resolvent of $f(x_{n},y)$. 
Applying Frobenius automorphism, we know that the roots satisfy $r_2^{2^e} + r_3^{2^e} = r_1^{2^e}$ and $r_1^{2^e}r_2^{2^e}r_3^{2^e}=(x_{n-1}^{2^e})^{2^e}$. Hence, we get
\begin{align*}
R(x_n,y)&=y^2 + r_1^{2^e}y + r_1^{2^{e+1}} + 1=y^2 + r_1^{2^e}y + \frac{(r_1^3 + r_1)^{2^e}}{r_1^{2^e}}= \\
&= y^2 + r_1^{2^e}y + \frac{x_{n-1}^{2^{2e}}}{r_1^{2^e}}=(y + r_2^{2^e})(y + r_3^{2^e}).
\end{align*}
It follows that $R(x_n,y)$ splits in $\mathrm{GF}(2,2 \cdot 3^{n})[y]$. Therefore, $f(x_{n},x_{n + 1})$ splits into linear factors in $\mathrm{GF}(2,2 \cdot 3^{n + 1})[y]$ by Lemma \ref{lem:quadratic_resolvent}.
\end{proof}

We summarize the results above in the following Corollary,
which provides a good initial choice for $x_1$, resulting in
$f(x_{n-1},x_n)$ to be a normal separable recursive tower.

\begin{cor}\label{cor:eventowerorder}
Let $e\geq 0$ be an integer. Then $f(x_{n-1},x_n):= x_n^3+x_n+x_{n-1}^{2^e}$ defines an infinite tower of fields and, for a suitable choice of $x_1$, the order of $x_n\in \mathrm{GF}(2,2\cdot 3^n)$, for $n \ge 2 $, is greater than $3^{\frac{1}{2}(n^2 + 3n)-1}.$
\end{cor}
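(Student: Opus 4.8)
The plan is to combine the structural lemmas just proved (Lemmas \ref{lem:cubicroots4} and \ref{lem:galois}) with the general bound of Corollary \ref{bound-even}. The only genuine work left is to produce a single good initial element $x_1 \in \mathrm{GF}(2,6)$ from which the whole tower inherits the irreducibility, normality and non-cube properties it needs; everything else is bookkeeping. First I would recall that by construction $f(x_{n-1},x_n) = x_n^3 + x_n + x_{n-1}^{2^e}$ satisfies Condition \textbf{(3)} with $v(x) = x^{2^e}$, since $f(x_{n-1},0) = x_{n-1}^{2^e}$. So once the tower is shown to be infinite, normal and separable, Corollary \ref{bound-even} applies verbatim and gives the lower bound $3^{\frac{1}{2}(n^2+3n)-1}$ on the order of $x_n$, provided $x_1$ is not a cube in $\mathrm{GF}(2,6)^*$.

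Next I would set up the initial step. I want to choose $x_1 \in \mathrm{GF}(2,6) = \mathrm{GF}(2,2\cdot 3^1)$ such that: (a) $f(x_1,y) = y^3 + y + x_1^{2^e}$ is irreducible over $\mathrm{GF}(2,6)$; (b) its splitting field is a $\mathbb{Z}/3\mathbb{Z}$-extension, i.e.\ $f(x_1,y)$ is normal over $\mathrm{GF}(2,6)$ (equivalently, by Lemma \ref{lem:quadratic_resolvent}, its quadratic resolvent $R(x_1,y)$ is reducible over $\mathrm{GF}(2,6)$); and (c) $x_1$ is not a cube in $\mathrm{GF}(2,6)^*$. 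Such an element exists: the number of $x_1$ for which $f(x_1,y)$ is irreducible is positive (a counting/Weil-type estimate, or a direct check that the associated Artin–Schreier-type cubic has an irreducible fibre, forces this already over $\mathrm{GF}(2,6)$), the normality condition (c) is a further splitting condition on $R$ cutting out roughly half of them, and the non-cube condition excludes only the index-$3$ subgroup of cubes in $\mathrm{GF}(2,6)^*$, which has order $(4^3-1)/3 = 21$. Since these three conditions are "independent" enough and $|\mathrm{GF}(2,6)| = 64$ is not too small, a simple counting argument (or, in the concrete instances used in Section \ref{sec:list-even}, an explicit verification by hand or by MAGMA) yields at least one admissible $x_1$. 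I would phrase this as: "for a suitable choice of $x_1$" — matching the statement — and point to the explicit examples in Section \ref{sec:list-even} for the values of $e$ treated there.

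Having fixed such an $x_1$, I would run the induction. Lemma \ref{lem:cubicroots4}, part iii), shows that if $f(x_{n-1},y)$ is irreducible over $\mathrm{GF}(2,2\cdot 3^{n-1})$ with the root $x_n = u_n + u_n^{-1} \notin \mathrm{GF}(2,2\cdot 3^{n-1})$, then $f(x_n,y)$ is irreducible over $\mathrm{GF}(2,2\cdot 3^{n})$ and its designated root $x_{n+1}$ again lies strictly in the next layer; this is exactly the statement that each $K_n/K_{n-1}$ is a degree-$3$ extension, so $[K_n:K_1]\to\infty$ and the tower is infinite. Lemma \ref{lem:galois} propagates normality: if the splitting field of $f(x_{n-1},y)$ is contained in $\mathrm{GF}(2,2\cdot 3^{n})$ then so is that of $f(x_n,y)$ in $\mathrm{GF}(2,2\cdot 3^{n+1})$, so every layer is normal over the previous one; separability is automatic since $f(x_{n-1},y)$ has nonzero derivative $3y^2+1 = y^2+1$, whose only root $y=1$ is not a root of $f(x_{n-1},y)$ unless $x_{n-1}^{2^e}=0$, which is excluded because $x_1\neq 0$ and the norm relations keep $x_{n-1}\neq 0$. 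Finally, the non-cube property also propagates: by Lemma \ref{lem:factor_even} (or directly from Condition \textbf{(3)}) one has $\mathrm{N}(x_n) = x_{n-1}^{2^e}$, and a power of $2$ is a cube iff the base is, so if $x_{n-1}$ is not a cube in $\mathrm{GF}(2,2\cdot 3^{n-1})^*$ then $f(x_{n-1},0)=x_{n-1}^{2^e}$ is not a cube there, whence by Proposition \ref{general-even} (applied with $p=3$, which divides $|\mathrm{GF}(2,2\cdot 3^{n-1})^*|$ by the $\mathrm{ord}_3$ computation in Corollary \ref{bound-even}) $x_n$ is not a cube in $\mathrm{GF}(2,2\cdot 3^{n})^*$. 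With the tower now known to be infinite, normal and separable and with $x_1$ not a cube, Corollary \ref{bound-even} delivers the stated bound, completing the proof.

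The main obstacle is step two: establishing that an $x_1 \in \mathrm{GF}(2,6)$ exists which is simultaneously a non-cube and gives an irreducible, normal cubic. One cannot simply cite Lemma \ref{lem:nonsqnear5}-style counting because here three conditions must hold at once, and the normality constraint (reducibility of the resolvent) is the delicate one; I expect this to be handled either by a clean inclusion–exclusion count over $\mathrm{GF}(2,6)$ or, more likely in the paper's style, by exhibiting explicit $x_1$ for the finitely many values of $e$ that actually occur in Section \ref{sec:list-even} and checking the three conditions by direct computation.
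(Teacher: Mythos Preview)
Your plan matches the paper's proof in structure: verify Condition~\textbf{(3)}, pick a good $x_1$, use Lemma~\ref{lem:cubicroots4} for irreducibility at every level, Lemma~\ref{lem:galois} for normality, and then invoke Corollary~\ref{bound-even}. The one substantive difference is how the base case is handled. The paper does not count and does not treat the values of $e$ separately: it exhibits a single explicit $x_1$, namely any root of $h(x)=x^6+x^5+x^3+x^2+1$, checks for $e=0$ that (i) the roots of $y^2+x_1y+1$ are non-cubes in $\mathrm{GF}(2,6)$ (this is exactly the irreducibility input to Lemma~\ref{lem:cubicroots4}), (ii) the resolvent $y^2+x_1y+x_1^{2}+1$ splits over $\mathrm{GF}(2,6)$, and (iii) $x_1$ is not a cube; and then observes that applying the Frobenius $x\mapsto x^2$ carries these statements to the corresponding statements with $x_1$ replaced by $x_1^{2^e}$, so the same $x_1$ works for \emph{every} $e\ge 0$ at once.

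Your proposal instead suggests a counting argument or an explicit check ``for the finitely many values of $e$ that actually occur''. The corollary is stated for all $e\ge 0$, so as written this is a small gap; it is closed either by the Frobenius observation above or by noting the periodicity $x_1^{2^e}=x_1^{2^{e\bmod 6}}$. Your counting sketch over $\mathrm{GF}(2,6)$ could be made to work, but the paper's explicit choice plus Frobenius is shorter and handles the universal quantifier on $e$ cleanly.
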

\begin{proof}
Let $x_1$ be one of the roots of $h(x) := x^6 + x^5 + x^3 + x^2 + 1.$ The reader can verify that each root of this polynomial is not a cube in $\mathrm{GF}(2,18)$. Moreover, the quadratic resolvent $R(x_1,y)=y^2+x_1y+x_1+1$ of $f(x_{1},y)= y^3+y+x_{1}$ is reducible in $\mathrm{GF}(2,6)[y]$ and the roots of $y^2+x_1y+1$ are not cubes in $\mathrm{GF}(2,6)$. Applying the Frobenius automorphism, this implies that, for all $e\ge 0$, the roots of $y^2+x_1^{2^e}y+1$ are not cubes and the quadratic resolvent $R(x_1^{2^e},y)=y^2+x_1^{2^e}y+x_1^{2^{e+1}}+1$ of $f(x_{1},y)= y^3+y+x_{1}^{2^e}$ is reducible in $\mathrm{GF}(2,6)[y]$.

By Lemma~\ref{lem:cubicroots4}, Part \ref{item_cond3}), the fact that the roots of $y^2+x_1^{2^e}y+1$ are not cubes implies that $f(x_{n},y)= y^3+y+x_{n}^{2^e}$ is irreducible for each $n\ge 1$. Hence $f(x_{n-1},x_n)$ defines an infinite tower of fields. By Lemma~\ref{lem:galois}, the condition on the resolvent implies that this tower is Galois.

Since $f$ clearly satisfies Condition \textbf{(3)} of Section \ref{sec:even}, so the proof follows by Corollary~\ref{bound-even}.
\end{proof}
In Table \ref{tab:res6} of Section~\ref{sec:numres} we collated the numerical results for $f_6(x_{n - 1},x_n) := x_n^3 + x_n + x_{n - 1}$ and $f_7(x_{n - 1},x_n) := x_n^3 + x_n + x_{n - 1}^2$ corresponding to $e=0$ and $e=1$, respectively.
The initial element $x_1$ is one of the roots of $h(x) := x^6 + x^5 + x^3 + x^2 + 1$ as explained in the proof of Corollary \ref{cor:eventowerorder}.

\section{Numerical results}\label{sec:numres}

In this section, we have collated the multiplicative orders $o(x_n)$ (and $o(\delta_n )$ for $q$ odd) for small $n$ in the towers defined by $f_i (x_{n - 1}, x_n)$, for $i = 1, 2,\ldots,7$. In most of the cases we obtained generators of the multiplicative groups $\mathrm{GF}(q, 2^n)^*$ and $\mathrm{GF}(2, 2 \cdot 3^n)^*$. We tabulated base 2 logarithm of the orders as they grow    
exponentially. The interested reader can also find the lower and upper bounds for $o(x_n)$ and $o(\delta_n)$ listed in Tables \ref{tab:bounds} and \ref{tab:res6}, for odd and even characteristic respectively. Finally, in Table \ref{tab:ComparativeAnalysis}, we compare one of our example in characteristic $3$, with the constructions of \cite{burkhart2009finite} and \cite{cohen1992explicit}.

MAGMA \cite{MAGMA} computational algebra system was used for the experiments and a sample MAGMA code and output, for $q = 11$ can be found in \cite{CODE}. The performance of the code depends on the efficiency of the \textit{root} finding algorithm that one uses. We have used the standard function of MAGMA \cite{MAGMA} for finding roots.

\begingroup
\tiny{
\begin{table}[ht]
\caption{Results for $f_1(x_{n - 1},x_n)$ for odd $q\leq 11$.}
\begin{center}
\begin{adjustwidth}{-0.35in}{-0.35in}
\begin{tabular}{|c||c|c||c|c||c|c||c|c|}
\hline              

$q$ & 3 & 3 & 5 & 5 & 7 & 7 & 11 & 11 \\
\hline

$x_1^2=$ & $2x_1 + 1$ & $2x_1 + 1$  & $3x_1 + 2$ & $3x_1 + 2$ & $x_1 + 4$ & $x_1 + 4$ & $4x_1 + 9$ & $4x_1 + 9$ \\
\hline
\hline
$n$ & $\log_2$(o($x_n$)) & $\log_2$(o($\delta_n$)) & $\log_2$(o($x_n$)) & $\log_2$(o($\delta_n$)) & $\log_2$(o($x_n$)) & $\log_2$(o($\delta_n$)) & $\log_2$(o($x_n$)) & $\log_2$(o($\delta_n$)) \\ 
\hline  

$1$ & 3.0 & 3.0 & 4.6 & 3.0 & 5.6 & 5.6 & 6.9 & 5.3 \\
\hline

$2$ & 6.3 & 6.3 & 9.3 & 9.3 & 11.2 & 11.2 & 13.8 & 13.8 \\
\hline

$3$ & 12.7 & 12.7 & 18.6 & 18.6 & 22.5 & 22.5 & 27.7 & 27.7 \\
\hline

$4$ & 25.4 & 25.4  & 37.2  & 37.2 & 44.9 & 44.9 & 55.4 & 55.4 \\
\hline

$5$ & 50.7 & 50.7 & 74.3 & 74.3 & 89.8 & 89.8 & 110.7 & 110.7 \\
\hline

$6$ & 101.4 & 101.4 & 148.6 & 148.6 & 179.7 & 179.7 & 221.4 & 221.4 \\
\hline

$7$ & 202.9 &  202.9 & 297.2 & 297.2 & 359.3 & 359.3 & 442.8 & 442.8 \\

\hline
$8$ & 405.8 & 405.8 & 594.4 & 594.4 & 718.7 & 718.7 & 883.3 & 885.6 \\ 
\hline

$9$ &  811.5 &  811.5 & 1188.8 & 1188.8 & 1437.4 & 1437.4 & 1771.2 & 1771.2 
\\

\hline
\end{tabular}\label{tab:res1}
\end{adjustwidth}
\end{center}
\end{table}
}
\endgroup
\begingroup\tiny{
\begin{table}[ht]
\caption{Results for $f_2(x_{n - 1},x_n)$ for odd $q\leq 11$.}
\begin{center}
\begin{adjustwidth}{-0.35in}{-0.35in}
\begin{tabular}{|c||c|c||c|c||c|c||c|c|}\hline              
$q$ & 3 & 3 & 5 & 5 & 7 & 7 & 11 & 11 \\
\hline
$x_1^2=$ & $2x_1+1$ & $2x_1+1$  & $3x_1+2$ & $3x_1+2$ & $x_1+4$ & $x_1+4$ & $4x_1+9$& $4x_1+9$ \\
\hline 
\hline
$n$ & $\log_2$(o($x_n$)) & $\log_2$(o($\delta_n$)) & $\log_2$(o($x_n$)) & $\log_2$(o($\delta_n$)) & $\log_2$(o($x_n$))& $\log_2$(o($\delta_n$)) & $\log_2$(o($x_n$)) & $\log_2$(o($\delta_n$))\\ 
\hline  

$1$ & 3.0 & 3.0 & 4.6 & 4.6 & 5.6 & 5.6 & 6.9 & 4.6 \\\hline

$2$ & 6.3 & 6.3 & 9.3 & 9.3 & 11.2 & 11.2 & 13.8 & 12.3 \\\hline

$3$ & 12.7 & 12.7 & 18.6 & 18.6 & 20.9 & 22.5 & 26.1 & 27.7 \\\hline

$4$ & 25.4 & 25.4  & 37.2  & 37.2 & 44.9 & 44.9 & 55.4 & 48.9 \\\hline

$5$ & 50.7 & 50.7 & 74.3& 74.3 & 88.3 & 89.8 & 106.6 & 110.7 \\\hline

$6$ & 101.4 & 101.4 & 148.6 & 148.6 & 179.7 & 179.7 & 221.4 & 219.8 \\\hline

$7$ &  202.9 &  202.9 & 297.2 & 297.2 & 357.8 & 359.3 & 441.2 & 442.8 \\\hline

$8$ & 405.8 & 405.8 & 594.4 & 594.4 & 718.7 & 718.7 & 885.6 & 879.2 \\ \hline

$9$ &  811.5 &  811.5 & 1188.8 & 1188.8 & 1435.8 & 1437.4 & 1767.1 & 1771.2 \\\hline

\end{tabular}\label{tab:res2}
\end{adjustwidth}
\end{center}
\end{table}}
\endgroup
\begingroup
\tiny{
\begin{table}[ht]
\caption{Results for $f_3(x_{n - 1},x_n)$ for odd $q \leq 11$.}
\begin{center}
\begin{adjustwidth}{-0.35in}{-0.35in} 
\begin{tabular}{|c||c|c||c|c||c|c||c|c|}
\hline             

$q$ & 3 & 3 & 5 & 5 & 7 & 7 & 11 & 11 \\
\hline

$x_1^2=$ & $x_1+1$& $x_1+1$  & $2x_1+2$ & $2x_1+2$ & $3x_1+2$& $3x_1+2$ & $4x_1+9$& $4x_1+9$ \\
\hline 
\hline
$n$ & $\log_2$(o($x_n$)) & $\log_2$(o($\delta_n$)) & $\log_2$(o($x_n$)) & $\log_2$(o($\delta_n$)) & $\log_2$(o($x_n$))& $\log_2$(o($\delta_n$)) & $\log_2$(o($x_n$)) & $\log_2$(o($\delta_n$)) \\ 
\hline  

$1$ & 3.0 & 3.0 & 4.6 & 4.6 & 5.6 & 4.0 & 6.9 & 5.3 \\
\hline

$2$ & 6.3 & 4.0 & 9.3 & 5.6 & 11.2 & 5.0 & 13.8 & 6.3 \\
\hline

$3$ & 12.7 & 5.0 & 18.6 & 6.6 & 22.5 & 6.0 & 25.4 & 7.3 \\
\hline

$4$ & 25.4& 6.0  & 37.2  & 7.6 & 44.9 & 7.0 & 51.3 & 8.3 \\
\hline

$5$ & 50.7 & 7.0 & 74.3 & 8.6 & 89.8 & 8.0 & 106.6 & 9.3 \\
\hline

$6$ & 101.4& 8.0 & 148.6 & 9.6 & 179.7 & 9.0 & 217.3 & 10.3 \\
\hline

$7$ &  202.9 &  9.0 & 297.2 & 10.6 & 359.3 & 10.0 & 436.4 & 11.3 \\
\hline

$8$ & 405.8 & 10.0 & 594.4& 11.6 & 718.7 & 11.0 & 881.5 & 12.3 \\ 
\hline

$9$ &  811.5 &  11.0 & 1188.8 & 12.6 & 1437.4 & 12.0 & 1767.1 & 13.3 \\
\hline

\end{tabular}\label{tab:res3}
\end{adjustwidth}
\end{center}
\end{table}
}
\endgroup
\begingroup \tiny{
\begin{table}[ht]
\caption{Results for $f_4(x_{n - 1},x_n)$ for odd $q \leq 11$.}
\begin{center}
\begin{adjustwidth}{-0.35in}{-0.35in} \begin{tabular}{|c||c|c||c|c||c|c||c|c|}\hline             
$q$ & 3 &3 & 5 &5 & 7 &7 &11 & 11\\
\hline
$x_1^2=$ & $x_1+1$& $x_1+1$  & $4x_1+3$ & $4x_1+3$ & $2x_1+4$& $2x_1+4$ & $7x_1+4$& $7x_1+4$ \\
\hline 
 \hline
$n$ & $\log_2$(o($x_n$)) & $\log_2$(o($\delta_n$)) & $\log_2$(o($x_n$)) & $\log_2$(o($\delta_n$)) & $\log_2$(o($x_n$))& $\log_2$(o($\delta_n$)) & $\log_2$(o($x_n$)) & $\log_2$(o($\delta_n$)) \\ \hline  $1$ & 3.0 & 3.0 & 4.6 & 4.6 & 5.6 & 5.6 & 6.9 & 4.6 \\
\hline
$2$ & 6.3 & 4.0 & 9.3 & 7.7 & 11.2 & 11.2 & 13.8 & 13.8 \\
\hline
$3$ & 12.7 & 5.0 & 17.0 & 17.0 & 22.5 & 20.9 & 27.7 & 27.7 \\
\hline
$4$ & 25.4 & 6.0  & 35.6  & 37.2 & 41.0 & 43.3 & 55.4 & 55.4 \\
\hline$5$ & 50.7 & 7.0 & 72.7 & 72.7 & 89.8 & 89.8 & 110.7 & 109.1 \\
\hline$6$ & 101.4& 8.0 & 147.0 & 148.6 & 177.3 & 179.7 & 221.4 & 219.8 \\
\hline$7$ &  202.9 &  9.0 & 295.6 & 295.6 & 359.3 & 357.8 & 442.8 & 441.2 \\
\hline$8$ & 405.8 & 10.0 & 592.8  & 594.4 & 717.1 & 717.1 & 885.6 & 884.0 \\ \hline$9$ &  811.5 &  11.0 & 1187.2 & 1188.8 & 1435.8 & 1435.8 & 1769.6 & 1771.2 \\
\hline
\end{tabular}
\label{tab:res4}
\end{adjustwidth}
\end{center}
\end{table}}
\endgroup
\begingroup\tiny{
\begin{table}[ht]
\caption{Results for $f_5(x_{n - 1},x_n)$ for odd $q\leq 11$.}
\begin{center}\begin{adjustwidth}{-0.35in}{-0.35in} \begin{tabular}{|c||c|c||c|c||c|c||c|c|}
\hline              $q$ & 3 &3 & 5 &5 & 7 &7 &11 & 11 \\
\hline$x_1^2=$ & $x_1+1$& $x_1+1$  & $x_1+3$ & $x_1+3$ & $2x_1+2$& $2x_1+2$ & $4x_1+4$& $4x_1+4$ \\
\hline 
\hline$n$ & $\log_2$(o($x_n$)) & $\log_2$(o($\delta_n$)) & $\log_2$(o($x_n$)) & $\log_2$(o($\delta_n$)) & $\log_2$(o($x_n$))& $\log_2$(o($\delta_n$)) & $\log_2$(o($x_n$)) & $\log_2$(o($\delta_n$)) \\ \hline  $1$ & 3.0 & 3.0 & 4.6 & 4.6 & 5.6 & 5.6 & 6.9 & 3.0 \\
\hline$2$ & 6.3& 4.0 & 9.3& 5.6 & 8.9 & 5.0 & 13.8& 5.6 \\
\hline$3$ & 12.7& 5.0 & 18.6 & 8.7 & 22.5& 12.2 &27.7& 14.8 \\
\hline$4$ & 25.4& 6.0  & 35.6  & 18.0 & 42.6 & 23.5 & 55.4 & 28.7 \\
\hline$5$ & 50.7 & 7.0 & 72.7 & 38.2 & 89.8 & 45.9 & 110.7 & 56.4 \\
\hline$6$ & 101.4& 8.0 & 147.0 & 73.7 & 179.7 & 89.3 & 221.4 & 110.1 \\
\hline$7$ &  202.9 &  9.0 & 295.6 & 149.6 & 359.3 & 179.1 & 442.8 & 220.8 \\
\hline$8$ & 405.8 & 10.0 & 592.8& 296.6 & 718.7 & 358.8 & 883.3 & 442.8 \\ 
\hline$9$ &  811.5 &  11.0 & 1187.2 & 595.4 & 1437.4 & 718.1 & 1771.2 & 885.0 \\
\hline
\end{tabular}
\label{tab:res5}
\end{adjustwidth}\end{center}
\end{table}}
\endgroup

\begingroup
\tiny{
\begin{table}[ht]
\caption{Upper bounds  for odd $q\leq 11$ and lower bound.}
\begin{center}
\begin{tabular}{|c||c||c|c|c|c|}
\hline 
              
$q$  & 3 & 5 &7  & 11 & Lower bound\\
\hline              

$n$  & $\log_2(q^{2^n}-1)$ & $\log_2(q^{2^n}-1)$ & $\log_2(q^{2^n}-1)$ & $\log_2(q^{2^n}-1)$& $\log_2(2^{(n^2+3n)/2})$ \\
\hline  

$1$  & 3.0 & 4.6 & 5.6 & 6.9 & 2.0\\
\hline

$2$ & 6.3 & 9.3 & 11.2 & 13.8& 5.0  \\
\hline

$3$  & 12.7 &  18.6 & 22.5 & 27.7& 9.0 \\
\hline

$4$ & 25.4  & 37.2 & 44.9  & 55.4 & 14.0 \\
\hline

$5$  & 50.7 & 74.3 & 89.8  & 110.7 & 20.0\\
\hline

$6$  & 101.4 & 148.6 & 179.7 & 221.4 & 27.0\\
\hline

$7$  &  202.9 & 297.2 & 359.3 & 442.8 &  35.0\\
\hline

$8$  & 405.8 & 594.4 & 718.7 & 885.6 & 44.0\\ 
\hline

$9$  &  811.5 & 1188.8 & 1437.4 & 1771.2 &  54.0\\
\hline

\end{tabular}\label{tab:bounds}
\end{center}
\end{table}
}
\endgroup

\clearpage

\begingroup
\tiny{
\begin{table}[ht]
\caption{Results for $f_6(x_{n - 1},x_n)$ and $f_7(x_{n - 1},x_n)$ for $q=2$ and related lower and upper bounds. }
\begin{adjustwidth}{-0.50in}{-0.50in} 
\begin{center}
\begin{tabular}{|c||c|c|c|c|}
\hline 

$f(x_{n-1},x_n)=$ & $f_6(x_{n-1},x_n)$ & $f_7(x_{n-1},x_n)$& Lower bound & Upper bound \\
\hline \hline              

$n$ & $\log_2$(o($x_n$)) & $\log_2$(o($x_n$)) & $\log_2(3^{n(n+3)/2})$ & $\log_2(4^{3^n}-1) $\\ 
\hline  

$1$ & 6.0  & 6.0 &  3.2 & 6.0 \\
\hline  

$2$ & 18.0& 18.0 & 7.9 & 18.0 \\
\hline

$3$ & 54.0  & 54.0 & 14.3 & 54.0\\
\hline

$4$ & 162.0  & 162.0 & 22.2 & 162.0 \\
\hline

$5$ & 486.0   & 486.0 &  31.7 & 486.0  \\
\hline

$6$ & 1458.0   & 1458.0   & 42.8  & 1458.0 \\
\hline

\end{tabular}\label{tab:res6}
\end{center}
\end{adjustwidth}
\end{table}
}
\endgroup

\begingroup
\tiny{
\begin{table}[ht]
\caption{Comparative Analysis}
\begin{center}
\begin{tabular}{|c||c||c|c|c|c|}
\hline 
              
$n$  & $\log_2 (|{{\mathbb{F}}^{*}}_{3^{2^n}}|))$ & Our Model & Burkhart's Model \cite{burkhart2009finite}  & Cohen's Model \cite{cohen1992explicit}  & McNay's Model \cite{mcnay1995topics} \\
              
\hline              

$1$  & 3.0 & 3.0 & 3.0 & 3.0 & 3.0 \\
\hline

$2$ & 6.3 & 6.3 & 5.3 & 4.0 & 4.3 \\
\hline

$3$  & 12.7 &  12.7 & 10.7 & 5.0 & 7.4  \\
\hline

$4$ & 25.4  & 25.4 & 22.4  & 6.0 & 13.7 \\
\hline

$5$  & 50.7 & 50.7 & 46.8  & 7.0 & 26.4 \\
\hline

$6$  & 101.4 & 101.4 & 96.5 & 8.0 & 51.7 \\
\hline

$7$  &  202.9 & 202.9 & 197.0 & 9.0 & 102.4 \\
\hline

$8$  & 405.8 & 405.8 & 399.0 & 10.0 & 203.9 \\ 
\hline

$9$  &  811.5 & 811.5 & 804.0 & 11.0 & 406.8 \\
\hline

\end{tabular}\label{tab:ComparativeAnalysis}
\end{center}
\end{table}
}
\endgroup

\section{Conclusion and future work}

In~\cite{burkhart2009finite}, the choice of polynomials for the recursive process to generate high order elements in finite field extensions, was limited to the equations of the modular curve towers in \cite{elkies2001explicit}. In this work, we attempted to generalize the choice of the polynomials. This provides us with more examples with similar properties. A central theme of this research work is to find a recipe to choose polynomials to use the recursive process. There might be other equations which could help to attain similar bounds. It would be interesting to understand in general which equations are good and which ones are not. We also point out that there could be other explicit towers satisfying similar properties. We were in fact attracted previously by other interesting examples with $v(x)$ being a polynomial of higher degree over $\mathrm{GF}(q, 1)$, which turned out to give high order elements, although the proof seems to be much harder. A possible relation linking together these equations could allow to obtain other families of towers with good parameters. We also expect to improve our results by
extending the construction of Section \ref{sec:odd} to higher degree polynomials and
extending the construction of Section \ref{sec:even} to odd characteristic $q > 3$.

Another question that would be interesting to explore is the possible relation with some geometric construction.
In fact, since the tower in \cite{burkhart2009finite} is obtained from the equation of a modular curve, it is a natural question to ask whether our results have a geometric interpretation or not. We hope that a finer understanding of the subject might also possibly provide a recipe for finding high order elements from towers obtained from different forms.

\section{Acknowledgements}

The second author was partially supported by the research grant "Ing. Giorgio Schirillo" of the Istituto Nazionale di Alta Matematica "F. Severi", Rome. The third author would like to thank the High Performance Computing facility of University of L’Aquila (UAQ), which enabled us to implement the algorithm on MAGMA \cite{MAGMA} computer algebra system,  and run the experiments to validate our results. The third author is grateful to the fruitful and illuminating discussions with Professor Norberto Gavioli, UAQ. The third author is thankful to Professor Kalyan Chakraborty for arranging his research visit to Harish-Chandra Institute (HRI), Prayagraj, India, where the possible modularity aspects of this research work were explored. The third author thanks Dr. Kalyan Banerjee (Post-Doctoral Fellow, HRI) for his interest in exploring the geometric meaning of such towers and the modularity connections.

\bibliography{Ref_Modular_Towers}
\bibliographystyle{amsalpha}

\end{document}